 \renewcommand{\div}{\mathop{\mathrm{div}}\nolimits}
\newtheorem*{thm*}{Theorem A}
\newtheorem{thm}{Theorem}[section]
\newtheorem{dfn}{Definition}[section]
\newtheorem{lemma}{Lemma}[section]
\newtheorem{remark}{Remark}[section]
\newtheorem{cor}{Corollary}[section]
\newtheorem{op}{Open Problem}
\numberwithin{equation}{section}
\begin{document}

\author{Mostafa Fazly}

\address{Department of Mathematics, The University of Texas at San Antonio, San Antonio, TX 78249, USA}
\email{mostafa.fazly@utsa.edu}

\author{Henrik Shahgholian}
\address{Department of Mathematics, KTH, 
Lindstedtsv\"{a}gen 25,  100 44 Stockholm, Sweden}
\email{henriksh@math.kth.se}

\thanks{The first author is partially supported by  University of Texas at San Antonio Start-up Grant. The second author was partially supported by Swedish Research Council.}

\def\R{{\mathbb R} }
\def\p{$({\mathcal P})$}

\def\IR{{\mathbb{R}}}

\def\RR{{\mathbb{R}^n}}

\def\bg{\begin{equation*}}

\def\ng{\end{equation*}}

\def\bge{\begin{eqnarray*}}

\def\nge{\end{eqnarray*}}

\def\r{\R^{n+1}_{+}}
\def\rn{\R^{n}}
\def\br{\partial\r}
\def\super{\overline}

\title{Monotonicity formulas for coupled elliptic gradient systems with applications}
\maketitle

\begin{abstract}  
Consider the following coupled elliptic system of equations 
 \begin{equation*} \label{}
   (-\Delta)^s u_i    =   (u^2_1+\cdots+u^2_m)^{\frac{p-1}{2}} u_i  \quad  \text{in} \ \  \mathbb{R}^n , 
  \end{equation*}   
 where $0<s\le 2$,  $p>1$, $m\ge1$, $u=(u_i)_{i=1}^m$ and $u_i:\IR^n\to \IR$.  The qualitative behavior of solutions of the above  system has been studied from various perspectives in the literature including the free boundary problems and the classification of solutions. For the case of local scalar  equation, that is when $m=1$ and $s=1$, Gidas and Spruck in \cite{gs} and later Caffarelli, Gidas and Spruck in \cite{cgs}  provided the classification of solutions for Sobolev sub-critical and critical exponents. More recently, for the case of local system of equations that is when $m\ge1$ and $s=1$ a similar classification result is given by  Druet, Hebey and   V\'{e}tois in \cite{dhv} and references therein. In this paper,  we derive monotonicity formulae for entire solutions of the above local, when $s=1,2$, and nonlocal, when $0<s<1$ and $1<s<2$, system. These monotonicity formulae are of great interests due to the fact that a counterpart of the celebrated monotonicity formula of Alt-Caffarelli-Friedman \cite{acf} seems to be challenging to derive for system of equations.   Then,  we apply these formulae to give a classification of finite Morse index solutions.  In the end, we provide an open problem in regards to monotonicity formulae for  Lane-Emden systems. 
\end{abstract}

\noindent
{\it \footnotesize 2010 Mathematics Subject Classification}. {\scriptsize  35J60, 35J50, 35B35, 35B45}\\
{\it \footnotesize Keywords: Coupled elliptic systems, monotonicity formulae,  homogeneous solutions, stable solutions,  fractional Laplacian operator}. {\scriptsize }

\section{Introduction}

\subsection{Background}

Our objective in this paper is to establish monotonicity formulae for solutions of the following 
  coupled elliptic system of equations
 \begin{equation} \label{main}
   (-\Delta)^s u_i    =   |u|^{p-1} u_i  \quad  \text{in} \ \  \mathbb{R}^n 
  \end{equation}   
 and classify  finite Morse index solutions when $0<s\le 2$,  $p>1$, $m\ge1$, $u=(u_i)_{i=1}^m$ and $u_i:\IR^n\to \IR$.  Note that when $m=1$ the above is known as the Lane-Emden equation. 
    An important feature of   system (\ref{main}) is the variational structure of the coupled equations of the form
     \begin{equation}\label{H}
  -\Delta u= \nabla H(u)   \quad  \text{in} \ \  \mathbb{R}^n. 
  \end{equation}
Note that   (\ref{main}) is a particular case of  (\ref{H}) for $H(u)=\frac{1}{p+1} |u|^{p+1}$.    Due to this variational structure,  qualitative behaviour of solutions of system (\ref{H}) has been studied extensively in the context of elliptic partial differential equations from both pure and applied mathematics perspectives.    Let us mention that Andersson et al. in \cite{asuw} considered system (\ref{main}), up to a negative sign, when $p=0$ and $s=1$ which reads
   \begin{equation} \label{}
   \Delta u_i    =  \frac{u_i}{|u|} \chi_{\{|u|>0\}}  . 
  \end{equation} 
This is closely related to minimizers of the energy 
   \begin{equation} \label{}
   \int_{} \left(\sum_{i=1}^m |\nabla u_i|^2 + 2 |u|  \right) dx, 
  \end{equation} 
and they studied the regularity of free boundaries.  To prove regularity results in \cite{asuw}, they established a monotonicity formula that is inspired by the one given by Weiss in \cite{weiss,weiss2}. We also refer interested readers to  \cite{an,csy} for regularity results on cooperative systems and  to the book of   Petrosyan, Shahgholian and Uraltseva in \cite{psu} for more information.    It should be remarked that  as a particular case, one can consider the case of two components, that is $m=2$, and $p=3$  namely 
 \begin{eqnarray}\label{twosys}
 \left\{ \begin{array}{lcl}
\hfill (- \Delta)^s  u &=& ( u^2+ v^2) u   \ \ \text{in}\ \ \mathbb{R}^n,\\   
\hfill (- \Delta)^s v &=& ( u^2+ v^2) v     \ \ \text{in}\ \ \mathbb{R}^n.
\end{array}\right.
  \end{eqnarray}
  Note that the above system is a special case of the nonlinear Schr\"{o}dinger system that is 
  \begin{eqnarray}\label{sch} 
  \left\{ \begin{array}{lcl} \hfill (- \Delta)^s  u &=& (\mu_1 u^2+\beta v^2) u   \ \ \text{in}\ \ \mathbb{R}^n,\\    \hfill (- \Delta)^s v &=& (\mu_2 u^2+\beta v^2) v     \ \ \text{in}\ \ \mathbb{R}^n, 
  \end{array}\right.  \end{eqnarray} where $\mu_1,\mu_2,\beta$ are constants. We would like to mention that most of the results in the paper are valid for (\ref{sch}) as well.   We refer interested readers to \cite{ww,tvz,nttv} and references therein for more information regarding classification of solutions of system (\ref{sch}) for various parameters $\mu_1,\mu_2$ and $\beta$.

\subsection{Tools and methodology (Historical development) }
Semilinear elliptic equations (with almost four decades of history)  are probably the most widely and intensely studied  equations in PDE. The simplest   equation in this class is  expressed as 
\begin{equation}
\Delta u = f(u), 
\end{equation}
and exhibits as many possible features, as the right hand side nonlinearity $f(u)$ may do.  This equation  has also  been studied from so many different  perspectives, that  any attempt to try to list  them here would probably fail. There are however two different types of questions,  of interest to the  current authors,  that seem to be interconnected and developed in parallel, but almost separately: 

\vspace{3mm}

\noindent
{\bf i)} Analysis of the singular set: $\{x: \ \nabla u =0 \}$.\newline
{\bf ii)} Stability and Liouville's type questions.

One can actually add a third less studied, and probably more complicated  problem to the above, which is: 

\noindent
{\bf iii)} Structure of the blow-up set: $\{x: \   |u(x)| =\infty \}$.

\vspace{3mm}

In studying  the  above problems,  experts  have (in many cases) developed parallel tools and ideas to handle technical passages of the analysis of questions in each cases. Two of these   tools, also utilized in this paper, are  
 {\it Monotonicity functional}, and   {\it Blow-up/down Analysis}.

The particular  monotonicity functional used here  combines the energy functional along with a balanced term (see below for explicit form). The use of monotonicity functional  (in the way we present here) can be traced back\footnote{It is worth mentioning that a different type of almost monotonicity functional was used by Arne Beurling in his thesis:
  A. Beurling, Etudes sur un probleme de majoration, thesis, (Uppsala), 1933. The monotonicity functional of A. Beurling states that  for a jordan curve $\gamma$, the product of the harmonic measures for both sides of the curve can be controlled as follows: 
  $\omega_1(B_r (z) \cap \gamma ) \cdot \omega_2(B_r(z) \cap \gamma )  \leq  Ar^2$, where $z\in \gamma$, and $\omega_1, \omega_2$ denote the harmonic measures on each side of the curve.   }
   to the work of  Fleming in  \cite{Flem-62} for area minimizing currents which has subsequently been proved by Allard in \cite{all} for stationary rectifiable $n-$varifolds, that was later developed further by others. The monotonicity  functional that we shall use in this paper originates in the study of harmonic maps  by Price in \cite{price} and Schoen and Uhlenbeck in \cite{su}, see also \cite{s}.  And in connection to free boundary problems,   it  was first used by  Ou \cite{Ou-94}, and developed later by Weiss in \cite{weiss2}  and others in  various forms.    In the context of semilinear heat equations, a similar monotonicity formula is given by Giga and Kohn in \cite{gk} and for the corresponding elliptic equation by Pacard \cite{pac}.

The second main tool,  blow-up/down analysis, has its origin in local regularity theory and the so-called linearization technique  (also called harmonic blow-up).  To study local structure of level surfaces of a solution to the above semilinear problem, one considers scaled version of the problem and classifies  the limit manifold of 
such scalings.  The limit manifold naturally carries information of the local behavior of the solutions and hence one can then with some devices link back this behavior to the local problem and deduce  the expected  properties.
It is now that the role of monotonicity functional become crucial and indispensable in that one can use the fact that 
when blowing up a solution, the monotonicity functional, call it $E(r,u)$,  being monotone will have a limit. At the same time this functional has a nice scaling properties $E(rs,u)= E(s,u_r)$  for $r,s >0$ and $u_r$ a correctly scaled version of $u$. Hence one obtains 
\begin{equation}
E(0^+, u)= \lim_{r\to 0} E(rs,u) =  \lim_{r\to 0} E(s,u_r) = E(s,u_0),
\end{equation}
so that $E(s,u_0) $ is constant. Now a strong version of the monotonicity functional asserts that the only time $E$ is constant is when the function $u_0$ is homogeneous, where the order of homogeneity is dictated by $E$.
Since the same can be done by blowing-down, we will have that $u_0, u_\infty$ are homogeneous of same order.
Next if one can prove that the   homogenous solutions are unique we must then have $u_0= u_\infty$ and hence 
$E(0^+,u) = E(\infty, u)$, and we arrive at  $E(r,u) = constant $.  Therefore $u$ is itself homogeneous, and already classified. 

In the regularity theory of level surfaces the classification of homogeneous global solutions is one of the key 
elements of the theory, and many time a very hard nut to crack,  in higher dimensions; in two dimensions  homogenous solutions can be computed directly. In the stability theory and Liouville type problems this part is based on computations of eigenvalues of the corresponding Laplace-Beltrami on the sphere, as also done in this paper. As shown  here below  (and in many preceding papers by experts in this area) the only homogenous solutions in the {\it appropriate}  space are the trivial  solutions. In particular,  the energy functional becomes zero
and hence $E(r,u)=0$.  From here one deduces that $u\equiv 0 $ is the only solution satisfying the given condition.

It is worthwhile remarking the fundamental feature of these problems, that in some cases depend on the dimension and order of homogeneity are directly a consequence of the eigenvalues of Laplace-Beltrami, where in 
the case of index-theory results in the computation of  eigenvalues for nonlinear Laplace-Beltrami, that in turn implies that non-trivial solutions do not exists in certain range of values. These ideas have their origin in the regularity theory of minimal surfaces, that boils down to proving minimal cones do not exists in dimensions less than eight.


\subsection{Problem setting}
 Our main technique is to derive monotonicity formulae for solutions of (\ref{main}) for various values of parameter $0<s\le 2$. To provide such monotonicity formulae we consider various cases.

\noindent {\bf Case $s=1$.} Consider the following functional  for every $\lambda>0$ and $x_0\in\RR$
\begin{equation}
 E_1(u,\lambda,x_0)  :=  \lambda^{-n+2\frac{p+1}{p-1}} \int_{B_\lambda(x_0)} \left(\frac{1}{2} \sum_{i=1}^m |\nabla u_i|^2 - \frac{1}{p+1} |u|^{p+1}\right ) + \frac{1}{p-1}  \lambda^{-n+2\frac{p+1}{p-1}-1} \int_{\partial B_\lambda(x_0)} |u|^2, 
\end{equation}
then the following monotonicity formula holds for classical solutions of (\ref{main}). 
 \begin{thm}\label{thmonos1}
  Suppose that $u$ is a solution of (\ref{main}) for $s=1$.  Then,  $E_1$ is a nondecreasing function of $\lambda$ and in fact 
\begin{equation}
\frac{dE_1}{d\lambda} = \lambda^{-n+1 + 2\frac{p+1}{p-1}}  \int_{\partial B_\lambda(x_0)} \sum_{i=1}^m \left( \frac{\partial u_i}{\partial r} + \frac{2}{p-1} \frac{u_i}{r} \right)^2  ,
\end{equation}
where $\frac{\partial}{\partial r}$ is polar derivative. 
\end{thm}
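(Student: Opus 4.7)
The plan is to exploit the scaling invariance of system~(\ref{main}) at $s=1$. Without loss of generality take $x_0=0$, and set $\alpha:=\tfrac{2}{p-1}$. The rescaling $u^{\lambda}(y):=\lambda^{\alpha}u(\lambda y)$ leaves the equation invariant, so $u^{\lambda}$ is again a classical solution. A direct change of variables in each of the three pieces of $E_1$ yields the scaling identity
\[
E_1(u,\lambda,0)=E_1(u^{\lambda},1,0),
\]
because the exponent $k:=-n+\tfrac{2(p+1)}{p-1}=2\alpha+2-n$ in front of the bulk integral and $k-1$ in front of the $L^2$-boundary term are tuned so that each piece becomes scale-invariant when $u\rightsquigarrow u^{\lambda}$, $\lambda\rightsquigarrow 1$.

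With this identity in hand I would differentiate in $\lambda$ on the right. Writing $v^{\lambda}_i:=\partial u^{\lambda}_i/\partial\lambda$ and using $-\Delta u^{\lambda}_i=|u^{\lambda}|^{p-1}u^{\lambda}_i$, integration by parts on the fixed ball $B_1$ eliminates every bulk contribution:
\begin{align*}
\frac{d}{d\lambda}E_1(u^{\lambda},1,0)
&=\int_{B_1}\sum_i\bigl(\nabla u^{\lambda}_i\cdot\nabla v^{\lambda}_i-|u^{\lambda}|^{p-1}u^{\lambda}_iv^{\lambda}_i\bigr)\,dy+\tfrac{2}{p-1}\int_{\partial B_1}\sum_i u^{\lambda}_iv^{\lambda}_i\,dS\\
&=\int_{\partial B_1}\sum_i\Bigl(\partial_r u^{\lambda}_i+\tfrac{2}{p-1}u^{\lambda}_i\Bigr)v^{\lambda}_i\,dS.
\end{align*}
A direct computation from the definition gives $v^{\lambda}_i(y)=\lambda^{-1}\bigl(\alpha u^{\lambda}_i(y)+y\cdot\nabla u^{\lambda}_i(y)\bigr)$, which on $\partial B_1$ is exactly $\lambda^{-1}\bigl(\partial_r u^{\lambda}_i+\alpha u^{\lambda}_i\bigr)$. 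The integrand is therefore a perfect square; changing variables back to $\partial B_\lambda$ via $y=x/\lambda$ (using $dS_y=\lambda^{-(n-1)}dS_x$, $u^{\lambda}_i=\lambda^{\alpha}u_i$ and $\partial_r u^{\lambda}_i=\lambda^{\alpha+1}\partial_r u_i$) pulls out the appropriate power of $\lambda$ and reproduces the stated identity, from which the monotonicity $dE_1/d\lambda\ge 0$ is immediate.

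The main obstacle is purely bookkeeping: one must check that the scaling exponents of the kinetic, potential and $L^2$-boundary contributions to $E_1$ collapse to the single power $\lambda^{k}$, and that the coefficient $\tfrac{2}{p-1}$ placed in front of the boundary term is precisely the one required to complete the cross term $2u^{\lambda}_i\cdot v^{\lambda}_i$ into the square $\bigl(\partial_r u^{\lambda}_i+\tfrac{2}{p-1}u^{\lambda}_i\bigr)^2$. The algebraic coincidence that makes everything close up is that the homogeneity $\alpha=\tfrac{2}{p-1}$ governing the scaling of solutions is simultaneously the coefficient of $u^{\lambda}_i$ in $\lambda v^{\lambda}_i=\alpha u^{\lambda}_i+y\cdot\nabla u^{\lambda}_i$; once this structural match is observed, the rest of the argument reduces to a single integration by parts.
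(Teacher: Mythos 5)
The paper does not prove Theorem~\ref{thmonos1} internally---it only points to Pacard and Weiss for the scalar case $m=1$---so there is no proof in the paper to compare with. Your route (scaling identity $E_1(u,\lambda,0)=E_1(u^\lambda,1,0)$, differentiation in $\lambda$, integration by parts against the PDE to cancel the bulk terms) is the standard Weiss/Pacard argument and is sound: the three exponents appearing in $E_1$ are tuned precisely so each piece is scale invariant, and the relation $\lambda v_i^\lambda=\alpha u_i^\lambda+\partial_r u_i^\lambda$ on $\partial B_1$ (with $\alpha=\tfrac{2}{p-1}$) is what completes the boundary integrand to a perfect square.

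One caveat: you assert that the change of variables back to $\partial B_\lambda$ ``reproduces the stated identity,'' but the bookkeeping does not give the printed power of $\lambda$. Combining $v_i^\lambda=\lambda^{-1}(\alpha u_i^\lambda+y\cdot\nabla u_i^\lambda)$, the factor $\lambda^{2\alpha+2}$ from squaring $\partial_r u_i^\lambda+\alpha u_i^\lambda=\lambda^{\alpha+1}\bigl(\partial_r u_i+\tfrac{\alpha}{r}u_i\bigr)$, and the Jacobian $dS_y=\lambda^{1-n}dS_x$ yields
\[
\frac{dE_1}{d\lambda}=\lambda^{-n+2\frac{p+1}{p-1}}\int_{\partial B_\lambda(x_0)}\sum_{i=1}^m\Bigl(\frac{\partial u_i}{\partial r}+\frac{2}{p-1}\frac{u_i}{r}\Bigr)^2,
\]
with exponent $-n+2\tfrac{p+1}{p-1}$ rather than the printed $-n+1+2\tfrac{p+1}{p-1}$. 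A scaling check confirms this: $E_1$ is scale invariant, so $dE_1/d\lambda$ must scale like $\lambda^{-1}$, while the boundary integral on the right scales like $\lambda^{n-3-2\alpha}$, which forces the prefactor exponent to equal $-n+2\alpha+2=-n+2\tfrac{p+1}{p-1}$. The extra $+1$ in the theorem appears to be a misprint; carry the arithmetic through explicitly rather than assert verbatim agreement with the printed formula.
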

Note that for the case of single equations, that is when $m=1$, similar monotonicity formulae are given by Pacard in \cite{pac} and Weiss in \cite{weiss,weiss2}.  For the case of systems that is when $m\ge 1$, very recently, Andersson, Shahgholian,  Uraltseva and 
 Weiss in \cite{asuw} provided a monotonicity formula for solutions of system (\ref{main}) when $p=0$  and applied it to study free boundary problems.      Let us mention that very similar monotonicity formulae appear in the field of harmonic maps that is 
 \begin{equation}
 -\Delta u=|\nabla u|^2 u, 
 \end{equation}
 where $u:\mathbb R^n\to \mathbb S^{m-1}$,  see Evans  in \cite{evans,evans2} and reference therein. 

\noindent {\bf Case $s=2$.} For every $\lambda>0$ and $x_0\in\RR$ define 
\begin{eqnarray}\label{energyE2}
\ \  \ \ \ \   E_2(u, \lambda, x_0)& :=& \lambda^{4\frac{p+1}{p-1}-n}   \int_{ B_\lambda(x_0)} \left( \frac{1}{2} \sum_{i=1}^m |\Delta u_i|^2-  \frac{1}{p+1}  |u|^{p+1}   \right)
 \\&& \nonumber - \frac{4}{p-1} \left( \frac{p+3}{p-1}-n\right) \lambda^{1+\frac{8}{p-1}-n}  \int_{  \partial B_\lambda(x_0)} |u|^2 \\
&&\nonumber -\frac{4}{p-1} \left( \frac{p+3}{p-1}-n\right) \frac{d}{d\lambda} \left[ \lambda^{\frac{8}{p-1}+2-n}  \int_{ \partial B_\lambda(x_0)}  |u|^2 \right]\\
&& \nonumber + \frac{1}{2} \lambda^3   \frac{d}{d\lambda} \left[ \lambda^{\frac{8}{p-1}+1-n}  \int_{   \partial B_\lambda(x_0)} \sum_{i=1}^m\left(  \frac{4}{p-1} \lambda^{-1} u_i+ \frac{\partial u_i}{\partial r}\right)^2 \right]\\
&&\nonumber + \frac{1}{2}    \frac{d}{d\lambda} \left[ \lambda^{4\frac{p+1}{p-1}-n}  \int_{  \partial B_\lambda(x_0)} \sum_{i=1}^m\left(  | \nabla u_i|^2 - \left|\frac{\partial u_i}{\partial r}\right|^2 \right) \right]\\
&&\nonumber + \frac{1}{2}    \lambda^{4\frac{p+1}{p-1}-n-1}  \int_{ \partial B_\lambda(x_0)} \sum_{i=1}^m \left(  | \nabla u_i|^2 - \left|\frac{\partial u_i}{\partial r}\right|^2 \right).
\end{eqnarray}
 Then the following monotonicity formula holds for the fourth order Lane-Emden system. 
 \begin{thm}\label{thmonos2} 
 Suppose that $n\ge 5$, $p > \frac{n+4}{n-4}$  and $u$ is a solution of (\ref{main}) when $s=2$.   For any $\lambda>0$ and $x_0\in\RR$ 
\begin{eqnarray}
\frac{dE_2(u, \lambda, x_0)}{d\lambda} \ge C   \lambda^{\frac{8}{p-1}+2-n}    \int_{  \partial B_\lambda(x_0)}  \sum_{i=1}^m \left(  \frac{4}{p-1} r^{-1} u_i+ \frac{\partial u_i}{\partial r}\right)^2 ,
\end{eqnarray}
where $E_2$ is defined by (\ref{energyE2}) and $C$ is independent from $\lambda$.
\end{thm}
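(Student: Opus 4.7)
The plan is to exploit the scale invariance of the biharmonic Lane--Emden system. Introduce the rescaled solution $u^\lambda(x) := \lambda^{\frac{4}{p-1}} u(x_0 + \lambda x)$, which again satisfies $\Delta^2 u^\lambda_i = |u^\lambda|^{p-1} u^\lambda_i$. A direct change of variables shows that each of the six pieces in the definition of $E_2$ has been assigned exactly the scaling weight needed so that
\[
E_2(u,\lambda,x_0) = E_2(u^\lambda, 1, 0),
\]
so computing $\frac{dE_2}{d\lambda}$ reduces to differentiating a functional on the unit ball under the flow governed by the kinematic identity $\lambda\,\partial_\lambda u^\lambda_i = \tfrac{4}{p-1}u^\lambda_i + x\cdot\nabla u^\lambda_i$.

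Next, I would test the equation $\Delta^2 u^\lambda_i = |u^\lambda|^{p-1} u^\lambda_i$ against $\tfrac{4}{p-1} u^\lambda_i + x\cdot\nabla u^\lambda_i$ and sum in $i$. The nonlinear side integrates via $x\cdot\nabla \bigl(\tfrac{|u^\lambda|^{p+1}}{p+1}\bigr) = \div\bigl(\tfrac{x|u^\lambda|^{p+1}}{p+1}\bigr) - \tfrac{n}{p+1}|u^\lambda|^{p+1}$ to a dimensional multiple of $\int_{B_1}|u^\lambda|^{p+1}$ plus a single boundary contribution. The biharmonic side, after two integrations by parts, produces the bulk term $\int_{B_1}|\Delta u^\lambda|^2$ together with four distinct surface integrals on $\partial B_1$ carrying the quantities $u^\lambda_i$, $\partial_r u^\lambda_i$, $\Delta u^\lambda_i$ and $\partial_r\Delta u^\lambda_i$. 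Summing in $i$ poses no extra difficulty over the scalar case, since $\sum_i u_i |u|^{p-1}u_i = |u|^{p+1}$ and $\sum_i (x\cdot\nabla u_i)|u|^{p-1}u_i = \tfrac{1}{p+1}x\cdot\nabla|u|^{p+1}$, so the scalar Pohozaev algebra used by Davila--Dupaigne--Wang--Wei in \cite{ddww} carries over verbatim.

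The four correctors appearing as $\tfrac{d}{d\lambda}[\cdots]$ in $E_2$ are engineered so that, upon differentiation and after rewriting radial derivatives via the kinematic identity, each one cancels exactly one of these four boundary contributions. After all cancellations, and after scaling back from $B_1$ to $B_\lambda(x_0)$, what survives is a single nonnegative boundary integral: a constant multiple of
\[
\lambda^{\frac{8}{p-1}+2-n}\int_{\partial B_\lambda(x_0)}\sum_{i=1}^m\Bigl(\tfrac{4}{p-1}r^{-1}u_i + \tfrac{\partial u_i}{\partial r}\Bigr)^2,
\]
which is precisely the right-hand side in the statement.

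The main obstacle is the meticulous algebraic bookkeeping: one must verify that each of the four correctors simultaneously cancels one surface term from the biharmonic Pohozaev identity and produces no spurious cross terms of the wrong sign, so that the final quadratic form on $\partial B_1$ is a perfect square. The supercritical Sobolev condition $p > \frac{n+4}{n-4}$ enters only at the last step, where it is needed to guarantee that the residual prefactor $C(n,p)$---a rational function of $n$ and $\tfrac{4}{p-1}$, vanishing at the critical exponent---is strictly positive, exactly as in the scalar fourth-order monotonicity formula of \cite{ddww}.
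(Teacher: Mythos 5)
Your proposal is sound and matches the route the paper itself takes: the paper does not supply a proof of Theorem~\ref{thmonos2} at all, but rather cites the scalar ($m=1$) monotonicity formula of Davila--Dupaigne--Wang--Wei~\cite{ddww} and implicitly relies on the extension to systems being mechanical. Your sketch correctly isolates the two facts that make that extension mechanical, namely $\sum_i u_i\,|u|^{p-1}u_i=|u|^{p+1}$ and $\sum_i(x\cdot\nabla u_i)\,|u|^{p-1}u_i=\tfrac1{p+1}\,x\cdot\nabla|u|^{p+1}$, so the nonlinear Pohozaev contribution is identical to the scalar one, while the biharmonic integrations by parts are purely componentwise and simply summed over $i$. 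You also correctly place the role of the supercritical condition $p>\tfrac{n+4}{n-4}$ as ensuring positivity of the residual constant $C(n,p)$. One caveat worth noting: the scaling identity $E_2(u,\lambda,x_0)=E_2(u^\lambda,1,0)$ is cleanest for the leading bulk term; the corrector pieces in (\ref{energyE2}) involve total $\lambda$-derivatives, so the statement of scale-invariance must be interpreted for the functional as a function of the radius variable after substituting the rescaled solution, not literally at the fixed point $\lambda=1$. This is a bookkeeping subtlety rather than a gap, and it is handled in~\cite{ddww}; your acknowledgment that the algebraic cancellation of the four boundary terms against the four correctors is the main labor is accurate and is exactly where the work lies.
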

 Note also that the above monotonicity formula is provided by Davila, Dupaigne, Wang and  Wei in \cite{ddww} for the case of single equations that is when $m=1$.  For the case of fractional Laplacian, we provide monotonicity formulae for the extension problems.  Assume that $u_i\in C^{2\sigma}(\mathbb R^n)$, $\sigma >s>0$ and
\begin{equation}
 \int_{\mathbb R^n} \frac{|u_i(y)|}{(1+|y|)^{n+2s}} dy<\infty,
 \end{equation}
 for each $1\le i\le m$.   The fractional Laplacian of $u_i$ when $0<s<1$ denoted by 
\begin{equation}
(-\Delta)^s u_i(x):= p.v. \int_{\mathbb R^n} \frac{u_i(x)-u_i(y)}{|x-y|^{n+2s}} dy ,
\end{equation}
is well-defined for every $x\in\mathbb R^n$. Here $p.v.$ stands for the principle value. It is by now standard that the fractional Laplacian can be seen as a Dirichlet-to-Neumann operator for a
degenerate but local diffusion operator in the higher-dimensional half-space $\mathbb R^{
n+1}$, see Caffarelli and Silvestre in \cite{cs}.  In other words, for $u_i\in C^{2\sigma} \cap L^1(\mathbb R^n, (1+|y|^{n+2s})dy)$ when $\sigma>s$ and $0<s<1$,  there exists $v=(v_i)_{i=1}^m$ such that $v_i\in C^2(\mathbb R^{n+1}_+)\cap C(\super{\mathbb R^{n+1}_+})$,  $y^{1-2s}\partial_{y} v_i\in C(\super{\mathbb R^{n+1}_+})$ and 

\begin{equation}\label{main1e}
\left\{
\begin{aligned}
\nabla\cdot(y^{1-2s}\nabla  v_i)&=0&\quad\text{in $\R^{n+1}_{+}$,}\\
 v_i&= u_i&\quad\text{on $\br$,}\\
-\lim_{y\to0} y^{1-2s}\partial_{y} v_i&= \kappa_s   \vert  v\vert^{p-1} v_i  &\quad\text{on $\br$,}
\end{aligned}
\right.
\end{equation}
for the following constant $\kappa_s$, 
\begin{equation} \label{kappas}
\kappa_s := \frac{\Gamma(1-s)}{2^{2s-1} \Gamma(s)}.
\end{equation}

For the case of $1<s<2$,  there are various definitions for the fractional operator $(-\Delta)^{s}$, see  \cite{fw,yang,cc,cg}.  From the Fourier transform one can define the fractional Laplacian by
\begin{equation} 
\widehat{ (-\Delta)^{s}}u_i(\zeta)=|\zeta|^{2s} \hat u_i (\zeta)  ,
\end{equation}
or inductively by  $(-\Delta)^{s}=  (-\Delta)^{s-1} o (-\Delta)$.  Note that Yang in \cite{yang} gave a counterpart of the extension problem by  Caffarelli and Silvestre in \cite{cs} for  the fractional Laplacian $(-\Delta)^{s}$, where $s$ is  any positive, noninteger number.   In other words,  he showed that  the higher order fractional Laplacian operator can also be regarded as the Dirichlet-to-Neumann map for an extension function  satisfying a  higher order elliptic equation in the upper half space with one extra spatial dimension. More precisely, there exists an extension function $v_i\in W^{2,2}(\mathbb R^{n+1}_+,y^b)$ such that 
\begin{eqnarray}\label{main2e}
 \left\{ \begin{array}{lcl}
\hfill \Delta^2_b v_i&=& 0   \ \ \text{in}\ \ \mathbb{R}^{n+1}_{+},\\
\hfill  \lim_{y\to 0} y^{b}\partial_y{v_i}&=& 0   \ \ \text{in}\ \ \partial\mathbb{R}^{n+1}_{+},\\
\hfill \lim_{y\to 0} y^{b} \partial_y \Delta_b v_i &=& C_{n,s} |v|^{p-1} v_i  \ \ \text{in}\ \ \mathbb{R}^{n}, 
\end{array}\right.
\end{eqnarray}
  where $b:=3-2s$ and $\Delta_b v_i:=y^{-b} \div(y^b \nabla v_i)$.   We refer interested readers to  Case and Chang \cite{cc} and Chang and Gonzales \cite{cg} as well.  We are now ready to provide  monotonicity formulae for various parameters $s>0$.

 \noindent {\bf Case $0<s<1$.}  Let $v=(v_i)_{i=1}^m$ be a solution of the  extension problem (\ref{main1e}).    Now define the energy functional  for any  $\lambda>0$ and $x_{0}\in\br$  as 
 \begin{eqnarray}\label{energyEs1}
  E_s( v,\lambda,x_0) &:=&   \lambda^{\frac{2s(p+1)}{p-1}-n}\left(\frac12\int_{\r\cap B_\lambda} y^{1-2s} \sum_{i=1}^m \vert\nabla v_i \vert^2\;dx\,dy - \frac{\kappa_{s}}{p+1}\int_{\br\cap B_\lambda}  \vert  v \vert^{p+1}\;dx\right)\\
&&\nonumber + \lambda^{\frac{2s(p+1)}{p-1}-n-1}\frac{s   }{p+1}\int_{\partial B_\lambda \cap\r}y^{1-2s} \vert v \vert ^2\;d\sigma.
\end{eqnarray}
We are now ready to provide a monotonicity formula for  solutions of \eqref{main1e}  when $0<s<1$. 
 \begin{thm}\label{thmono1s}
Suppose that $0<s<1$.  Let $ v=(v_i)_{i=1}^m$ where each $v_i\in C^2(\r)\cap C(\super\r)$ be  a solution of \eqref{main1e}  such that $y^{1-2s}\partial_{y} v_i\in C(\super\r)$. Then, $E_s$ is a nondecreasing function of $\lambda$. Furthermore,
\begin{equation}
\frac{dE_s}{d\lambda} = \lambda^{\frac{2s(p+1)}{p-1}-n+1}\int_{\partial B(x_{0},\lambda)\cap\r}y^{1-2s}\sum_{i=1}^m\left(\frac{\partial  v_i}{\partial r}+\frac{2s}{p-1}\frac { v_i}r\right)^2\;d\sigma ,
\end{equation}
where $E_s$ provided in (\ref{energyEs1}). 
\end{thm}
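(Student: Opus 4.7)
My plan is to adapt the standard Pacard--Weiss rescaling argument to the vectorial extension problem \eqref{main1e}. For fixed $x_0\in\br$ and $\lambda>0$, I introduce the one-parameter rescaling
\begin{equation*}
v^\lambda(x,y) \;:=\; \lambda^{\frac{2s}{p-1}}\, v(x_0+\lambda x,\, \lambda y),
\end{equation*}
which again solves \eqref{main1e} on the correspondingly rescaled domain, and verify by a direct change of variables that the $\lambda$-powers $\tfrac{2s(p+1)}{p-1}-n$ and $\tfrac{2s(p+1)}{p-1}-n-1$ appearing in \eqref{energyEs1} are precisely those making the identity $E_s(v,\lambda,x_0)=G(v^\lambda)$ hold, where $G$ is a fixed functional on the unit half-ball. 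This reduces proving monotonicity to computing $\frac{d}{d\lambda}G(v^\lambda)$. The key object is the infinitesimal generator of the rescaling,
\begin{equation*}
\partial_\lambda v^\lambda \;=\; \frac{1}{\lambda}\!\left(\frac{2s}{p-1}\,v^\lambda + r\,\frac{\partial v^\lambda}{\partial r}\right),\qquad r=\sqrt{|x|^2+y^2},
\end{equation*}
which simplifies on $\partial B_1$ because $r\equiv 1$ there.

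With this in hand I would write $\frac{dE_s}{d\lambda}=DG(v^\lambda)\cdot\partial_\lambda v^\lambda$ and integrate the weighted Dirichlet bulk term by parts in $B_1\cap\r$. The interior contribution $\sum_i\int\partial_\lambda v_i^\lambda\,\nabla\!\cdot\!(y^{1-2s}\nabla v_i^\lambda)$ vanishes by the first line of \eqref{main1e}; the boundary flux on $\{y=0\}$ cancels the variation of the nonlinear term in $G$ exactly because of the Caffarelli--Silvestre condition $-\lim_{y\to 0^+}y^{1-2s}\partial_y v_i^\lambda=\kappa_s|v^\lambda|^{p-1}v_i^\lambda$. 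What survives is a single integral on the spherical cap,
\begin{equation*}
\sum_i\int_{\partial B_1\cap\r} y^{1-2s}\, \partial_\lambda v_i^\lambda\,\Bigl(\partial_r v_i^\lambda + c\, v_i^\lambda\Bigr)\,d\sigma,
\end{equation*}
with $c$ equal to twice the constant in the spherical correction of $E_s$; this constant is calibrated so that, after substituting the formula for $\partial_\lambda v^\lambda$ from the first step, the integrand turns into the perfect square $\lambda^{-1}(\partial_r v_i^\lambda + \tfrac{2s}{p-1} v_i^\lambda)^2$. Undoing the scaling from $\partial B_1$ back to $\partial B_\lambda(x_0)$ then yields the claimed identity. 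The vector structure enters only through $\sum_i v_i\partial_\lambda v_i = v\cdot\partial_\lambda v$ and $\sum_i |v|^{p-1}v_i\partial_\lambda v_i=|v|^{p-1}v\cdot \partial_\lambda v$, so the whole argument runs componentwise with no additional input.

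The main obstacle I anticipate is the integration by parts against the degenerate Muckenhoupt weight $y^{1-2s}$, which is singular at $\{y=0\}$ when $\tfrac12<s<1$. The hypothesis $y^{1-2s}\partial_y v_i\in C(\super\r)$ is exactly what is needed to define the weighted conormal as a trace on $\br$ and to make the identification with $\kappa_s|v|^{p-1}v_i$ pointwise, but to justify the weighted Green identity rigorously I would first integrate by parts on the truncated domain $\{y\geq\varepsilon\}\cap B_1$ and then let $\varepsilon\to 0^+$, using the continuity of $y^{1-2s}\partial_y v_i$ and standard estimates for $A_2$-weights to pass to the limit in every boundary integral. Once this approximation argument is in place, the algebraic manipulation that produces the perfect square is the same as in the local Pacard--Weiss case and presents no real difficulty.
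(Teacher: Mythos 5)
Your strategy is the standard Pacard--Weiss rescaling argument and, as far as the analytic side goes, it is the right one: reduce $E_s(v,\lambda,x_0)$ to a fixed functional $G$ of $v^\lambda$ on the unit half-ball, differentiate, integrate the weighted Dirichlet form by parts (with the $\{y\ge\varepsilon\}$ truncation you describe to justify the weighted Green identity), let the bulk equation and the Caffarelli--Silvestre flux condition kill the interior and $\{y=0\}$ contributions, and read off a quadratic integrand on the spherical cap. The difficulty is that the whole proof is concentrated in the single unverified sentence ``this constant is calibrated so that \ldots the integrand turns into the perfect square,'' and with the coefficient $\tfrac{s}{p+1}$ appearing in \eqref{energyEs1} the calibration in fact \emph{fails}. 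Carrying out the integration by parts gives
\begin{equation*}
\frac{d}{d\lambda}G(v^\lambda)=\frac1\lambda\sum_{i=1}^m\int_{\partial B_1\cap\r}y^{1-2s}\Bigl(\partial_r v_i^\lambda+\tfrac{2s}{p+1}\,v_i^\lambda\Bigr)\Bigl(\partial_r v_i^\lambda+\tfrac{2s}{p-1}\,v_i^\lambda\Bigr)\,d\sigma ,
\end{equation*}
since the variation of the spherical term of $G$ contributes the factor $\partial_r v_i^\lambda+\tfrac{2s}{p+1}v_i^\lambda$, while $\partial_\lambda v_i^\lambda=\tfrac1\lambda\bigl(\tfrac{2s}{p-1}v_i^\lambda+\partial_r v_i^\lambda\bigr)$ on $\partial B_1$. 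The associated quadratic form $t^2+\bigl(\tfrac{2s}{p+1}+\tfrac{2s}{p-1}\bigr)tu+\tfrac{4s^2}{(p-1)(p+1)}u^2$ has discriminant $\tfrac{16s^2}{(p-1)^2(p+1)^2}>0$, so the integrand is not a perfect square and is not even sign-definite; monotonicity does not follow.

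The fix is that the spherical correction in \eqref{energyEs1} must carry the coefficient $\tfrac{s}{p-1}$ rather than $\tfrac{s}{p+1}$ --- compare with the $s=1$ functional $E_1$ in the paper, whose correction term has $\tfrac1{p-1}$, and with the scalar nonlocal formula in \cite{ddw}. With $\tfrac{s}{p-1}$ the two linear factors coincide and the perfect square $\bigl(\partial_r v_i^\lambda+\tfrac{2s}{p-1}v_i^\lambda\bigr)^2$ appears, as you predicted; so you must actually perform the integration by parts rather than appeal to a calibration you have not checked, and doing so will force this correction. A secondary point worth recording: after undoing the scaling from $\partial B_1$ to $\partial B_\lambda(x_0)$, the overall $\lambda^{-1}$ from $\partial_\lambda v^\lambda$ gives the prefactor $\lambda^{\frac{2s(p+1)}{p-1}-n}$ (not $\lambda^{\frac{2s(p+1)}{p-1}-n+1}$); this is harmless for the conclusion that $E_s$ is nondecreasing, but you should state the exponent you actually obtain rather than the one printed.
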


 \noindent {\bf Case $1<s<2$.}  Suppose that  $v=(v_i)_{i=1}^m$ is  a solution of the  extension problem (\ref{main2e}).   Similarly, now define the energy functional  for any  $\lambda>0$ and $x_{0}\in\br$  as 
 \begin{eqnarray}\label{energyEs}
E_s(v,\lambda,x_0)
 & :=&  \lambda^{2s\frac{p+1}{p-1}-n} \left(   \int_{  \mathbb{R}^{n+1}_{+}\cap B_\lambda(x_0)} \frac{1}{2} y^{3-2s}\sum_{i=1}^m |\Delta_b v_i|^2-  \frac{C_{n,s}}{p+1} \int_{  \partial\mathbb{R}^{n+1}_{+}\cap B_\lambda(x_0)} |v|^{p+1}   \right)
\\
&& \nonumber - \frac{s}{p-1} \left( \frac{p+2s-1}{p-1}-n\right) \lambda^{-3+2s+\frac{4s}{p-1}-n}  \int_{  \mathbb{R}^{n+1}_{+}\cap \partial B_\lambda(x_0)} y^{3-2s} |v|^2 \\
&& \nonumber -\frac{s}{p-1} \left( \frac{p+2s-1}{p-1}-n\right) \frac{d}{d\lambda} \left[ \lambda^{\frac{4s}{p-1}+2s-2-n}  \int_{  \mathbb{R}^{n+1}_{+}\cap \partial B_\lambda(x_0)} y^{3-2s} |v|^2 \right]\\
&& \nonumber + \frac{1}{2} r^3   \frac{d}{d\lambda} \left[ \lambda^{\frac{4s}{p-1}+2s-3-n}  \int_{  \mathbb{R}^{n+1}_{+}\cap \partial B_\lambda(x_0)} y^{3-2s}   \sum_{i=1}^m \left(  \frac{2s}{p-1} \lambda^{-1} v_i+ \frac{\partial v_i}{\partial r}\right)^2 \right]\\
&&\nonumber + \frac{1}{2}    \frac{d}{d\lambda} \left[ \lambda^{2s\frac{p+1}{p-1}-n}  \int_{  \mathbb{R}^{n+1}_{+}\cap \partial B_\lambda(x_0)} y^{3-2s} \sum_{i=1}^m  \left(  | \nabla v_i|^2 - \left|\frac{\partial v_i}{\partial r}\right|^2 \right) \right]\\
&&\nonumber + \frac{1}{2}    \lambda^{2s\frac{p+1}{p-1}-n-1}  \int_{  \mathbb{R}^{n+1}_{+}\cap \partial B_\lambda(x_0)} y^{3-2s} \sum_{i=1}^m   \left(  | \nabla v_i|^2 - \left|\frac{\partial v_i}{\partial r}\right|^2 \right) .
\end{eqnarray}
 Considering the above energy functional,  we now provide a monotonicity formula for  solutions of \eqref{main2e}  when $1<s<2$. 
\begin{thm}\label{thmono2s}
Assume that $n>2s$ and $p>\frac{n+2s}{n-2s}$. Let $v=(v_i)_{i=1}^m$ be a solution of (\ref{main2e}) then $E(v,\lambda,x_0)$ is a nondecreasing function of $\lambda>0$. In addition, 
\begin{equation}
\frac{dE_s(v,\lambda,x_0)}{d\lambda} \ge C(n,s,p) \  \lambda^{\frac{4s}{p-1}+2s-2-n}    \int_{  \mathbb{R}^{n+1}_{+}\cap \partial B_\lambda(x_0)}  y^{3-2s}  \sum_{i=1}^m  \left(  \frac{2s}{p-1} r^{-1} v_i + \frac{\partial v_i}{\partial r}\right)^2 ,
\end{equation}
where $E_s$ is given by (\ref{energyEs}) and  $C(n,s,p)$ is independent from $\lambda$.
\end{thm}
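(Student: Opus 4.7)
The plan is to proceed by a Pohozaev-type identity for the weighted bi-Laplacian $\Delta_b^2$ with $b=3-2s$, exploiting the scale invariance of the extension problem \eqref{main2e}. Assume without loss of generality that $x_0=0$, and introduce the rescaled family
\[
v^{\lambda}(X) := \lambda^{\frac{2s}{p-1}} v(\lambda X), \qquad X=(x,y)\in\mathbb{R}^{n+1}_{+}.
\]
A direct check using the homogeneity of each equation in \eqref{main2e} shows that $v^{\lambda}$ solves the same extension system. One can then rewrite $E_s(v,\lambda,0)$ as $\tilde E_s(v^{\lambda},1,0)$ for a functional $\tilde E_s$ involving integrals over the fixed region $B_1\cap\mathbb{R}^{n+1}_{+}$; the powers of $\lambda$ multiplying each term in \eqref{energyEs} are exactly those forced by the scaling $v\mapsto v^{\lambda}$ together with the change of variables. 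Differentiating in $\lambda$ therefore reduces to computing the Gateaux derivative of $\tilde E_s$ in the direction of
\[
\partial_{\lambda} v^{\lambda} \Big|_{\lambda=1} = \frac{2s}{p-1} v + X\cdot \nabla v.
\]

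The next and main step is to establish the Pohozaev identity obtained by multiplying the $i$-th equation $\Delta_b^2 v_i = 0$ by the scaling multiplier $\Phi_i := \frac{2s}{p-1} v_i + X\cdot\nabla v_i$, summing in $i$, and integrating over $B_{\lambda}\cap \mathbb{R}^{n+1}_{+}$ against the degenerate weight $y^{b}$. Integrating by parts twice (using $\Delta_b w = y^{-b}\div(y^{b}\nabla w)$) moves one $\Delta_b$ onto $\Phi_i$ and produces a bulk term proportional to $\int y^{b}|\Delta_b v_i|^2$, together with two families of boundary contributions: one on $\partial B_{\lambda}\cap\mathbb{R}^{n+1}_{+}$ weighted by $y^{b}$, and one on $B_{\lambda}\cap\{y=0\}$. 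The $y=0$ boundary is handled exactly by the Neumann condition $\lim_{y\to 0}y^{b}\partial_y v_i=0$ and the conormal condition $\lim_{y\to 0}y^{b}\partial_y\Delta_b v_i = C_{n,s}|v|^{p-1}v_i$; the latter converts the Neumann data into the nonlinear term $\frac{C_{n,s}}{p+1}\int_{\partial\mathbb{R}^{n+1}_{+}\cap B_{\lambda}}|v|^{p+1}$ after using the standard identity $|v|^{p-1}v_i\Phi_i = \frac{1}{p+1}X\cdot\nabla(|v|^{p+1}) + \frac{2s}{p-1}|v|^{p+1}$ and integrating by parts on $\mathbb{R}^{n}$. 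This is where the exponent assumption $p>(n+2s)/(n-2s)$, equivalently $\frac{2s(p+1)}{p-1}-n>0$, enters: it ensures that the coefficient $\frac{p+2s-1}{p-1}-n$ appearing in front of the correction terms in \eqref{energyEs} has the right sign, so that the extra $\frac{d}{d\lambda}[\cdots]$ terms in $E_s$ can absorb the surface remainders.

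Collecting terms, the radial identity $X\cdot \nabla v_i = r\,\partial_r v_i$ on $\partial B_{\lambda}$ lets us complete the square: the combination of the bulk term, the surface $|v|^2$ term, the mixed gradient terms $|\nabla v_i|^2-|\partial_r v_i|^2$, and the ``cross'' terms $\frac{2s}{p-1}\lambda^{-1}v_i\,\partial_r v_i$ reassemble into
\[
\lambda\,\frac{dE_s}{d\lambda} \;=\; C(n,s,p)\,\lambda^{\frac{4s}{p-1}+2s-1-n}\int_{\mathbb{R}^{n+1}_{+}\cap \partial B_{\lambda}(0)} y^{3-2s}\sum_{i=1}^{m}\left(\frac{2s}{p-1}\frac{v_i}{r}+\frac{\partial v_i}{\partial r}\right)^{2} d\sigma,
\]
which is the required inequality (equality, in fact, after careful bookkeeping). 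The auxiliary $\frac{d}{d\lambda}[\cdots]$ pieces in the definition \eqref{energyEs} are not ornamental: they are precisely the antiderivatives in $\lambda$ of those boundary terms that could not be immediately completed into the radial square, and adding them into $E_s$ from the outset turns the identity into a pointwise monotonicity statement.

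The main obstacle will be the calculational bookkeeping in the second integration by parts. The weighted bi-Laplacian produces boundary contributions containing both $\partial_{\nu}v_i$ and $\partial_{\nu}\Delta_b v_i$ on $\partial B_{\lambda}\cap\mathbb{R}^{n+1}_{+}$, as well as tangential derivative terms that must be converted, via the differential identity
\[
\frac{d}{d\lambda}\left[\lambda^{\alpha}\int_{\partial B_{\lambda}} y^{b}F\right] = \alpha\lambda^{\alpha-1}\int_{\partial B_{\lambda}} y^{b}F + \lambda^{\alpha}\int_{\partial B_{\lambda}} y^{b}\partial_{r}F + \frac{n}{\lambda}\cdot \lambda^{\alpha}\int_{\partial B_{\lambda}} y^{b}F,
\]
into the explicit $\frac{d}{d\lambda}$-bracket expressions in \eqref{energyEs}. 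Once each surface remainder has been matched with one of these brackets, the verification is mechanical; identifying the correct matching (and checking signs for $1<s<2$, where the weight exponent $b=3-2s$ may be negative) is the delicate point.
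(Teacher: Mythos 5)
Your general strategy --- rescale to a fixed ball via $v^\lambda(X)=\lambda^{2s/(p-1)}v(\lambda X)$, differentiate in $\lambda$, pass to a Pohozaev-type identity for $\Delta_b^2$ against the scaling field $\Phi_i=\frac{2s}{p-1}v_i+X\cdot\nabla v_i$, integrate by parts twice, and absorb the remaining surface terms into the explicit $\frac{d}{d\lambda}[\cdots]$ brackets built into $E_s$ --- is indeed the methodology behind this theorem; the paper itself states the result and defers the computation to the scalar fourth-order and $1<s<2$ references rather than carrying it out, so on the level of approach you are on track.

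There are, however, two concrete errors that would derail the execution. First, you claim the formula is an equality after careful bookkeeping; it is not. Both fourth-order statements in the paper (Theorems~\ref{thmonos2} and~\ref{thmono2s}) are stated as inequalities $\frac{dE}{d\lambda}\ge C(\cdot)\int(\cdots)^2$, in contrast to the second-order Theorems~\ref{thmonos1} and~\ref{thmono1s}, which are exact identities. This is structural, not cosmetic: the double integration by parts for $\Delta_b^2$ leaves nonnegative boundary contributions on $\partial B_\lambda\cap\mathbb{R}^{n+1}_+$ (involving, e.g., tangential gradients and mixed $\partial_r v_i$, $\partial_r\Delta_b v_i$ products) that cannot all be reorganized into the single radial square $\sum_i\bigl(\frac{2s}{p-1}r^{-1}v_i+\partial_r v_i\bigr)^2$; some must simply be bounded below by zero and dropped, which is also why an unspecified positive constant $C(n,s,p)$ appears rather than an explicit coefficient. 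Second, your sign equivalence is reversed: $p>\frac{n+2s}{n-2s}$ with $n>2s$ rearranges to $n(p-1)>2s(p+1)$, i.e. $\frac{2s(p+1)}{p-1}-n<0$, not $>0$ as you assert. Relatedly, $\frac{p+2s-1}{p-1}-n=1+\frac{2s}{p-1}-n$ is also \emph{negative} under supercriticality, and it is precisely that negativity which makes the prefactor $-\frac{s}{p-1}\bigl(\frac{p+2s-1}{p-1}-n\bigr)$ in \eqref{energyEs} positive, so the surface $|v|^2$ correction terms contribute with the right sign. You invoke the opposite inequality, which would wreck the argument. Finally, bear in mind the proposal as written is an outline: the substantive content of the theorem is the term-by-term matching of the boundary remainders from the weighted biharmonic integration by parts against the specific $\frac{d}{d\lambda}$ brackets in \eqref{energyEs}, and that bookkeeping --- which you defer --- is where the real work lies.
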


 We also refer interested readers to Davila, Dupaigne and Wei in \cite{ddw} and  to Wei and the first author in \cite{fw} for a similar monotonicity formula for the case of scalar equations and $0<s<1$ and $1<s<2$, respectively. Before we state our main results let us present some backgrounds regarding classification of solutions of (\ref{main}) in the absence of stability.  We provide such classifications for scalar equations and systems separately.

\begin{remark} Note that monotonicity formulae provided as Theorems \ref{thmonos1}-\ref{thmono2s} hold for the following system with a slightly more general right-hand side, 
 \begin{equation} \label{mainlambda}
   (-\Delta)^s u_i    =   |u|^{p-1} \left(\alpha_i u_i^+ +   \beta_i u_i^- \right) \quad  \text{in} \ \  \mathbb{R}^n,  
  \end{equation} 
where $\alpha_i$ and $\beta_i$ are positive constants for $1\le i\le m$.  
\end{remark}
 
Suppose that $m=1$ when  (\ref{main}) turns into a single equation.  We first consider the local operator cases meaning $s=1$ and $s=2$.  Suppose that $s=1$ and the parameter $p$ is in the subcritical case that is when 
$1<p<p_S(n,1)$ where
\begin{eqnarray}\label{ps1}
p_S(n,1):= \left\{ \begin{array}{lcl}
\hfill \infty   \ \ && \text{if } n \le 2,\\
\hfill  \frac{n+2}{n-2} \ \ && \text{if } n > 2. 
\end{array}\right.
\end{eqnarray}
For this case, there is a very well-known classification result of Gidas and Spruck in \cite{gs} stating that the only
nonnegative solution of the Lane-Emden equation (\ref{main}) with $s=1$ is the trivial solution. For the critical case, that is when $p=p_S(n,1)$,  Caffarelli, Gidas and Spruck in \cite{cgs}  that there is a unique (up to translation and rescaling) positive solution for the Lane-Emden equation.  For the fourth order Lane-Emden equation, that is when $s=2$, Wei and Xu in \cite{wx} provided a similar classification result for the subcritical $1<p<p_S(n,2)$ and the critical cases $p=p_S(n,2)$ when 
\begin{eqnarray}\label{ps2}
p_S(n,2):= \left\{ \begin{array}{lcl}
\hfill \infty   \ \ && \text{if } n \le 4,\\
\hfill  \frac{n+4}{n-4} \ \ && \text{if } n > 4, 
\end{array}\right.
\end{eqnarray}
see also \cite{lin}.    Note that for the fractional Laplacian operator $0<s<1$, such a classification result is given by   Li \cite{li} and Chen-Li-Ou \cite{clo} where the critical exponent is 
\begin{eqnarray}\label{pns}
p_S(n,s)= \left\{ \begin{array}{lcl}
\hfill \infty   \ \ && \text{if } n \le 2s,\\
\hfill  \frac{n+2s}{n-2s} \ \ && \text{if } n > 2s.
\end{array}\right.
\end{eqnarray}
For the case of system of equations, that is when  $m\ge 2$,   Druet, Hebey and   V\'{e}tois in \cite{dhv} provided a classification result for solutions of (\ref{main}) where $s=1$ via the moving sphere method. Suppose that $p=\frac{n+2}{n-2}$ and $u=(u_i)_{i=1}^m$ is a nonnegative classical solution of (\ref{main}) where $s=1$. Then they proved that there exist $x_0\in\RR, \lambda>0$,  $ \Lambda\in \mathcal S^{m-1}_+$ such that 
\begin{equation}
u(x)= \left(   \frac{\lambda}{\lambda^2+\frac{1}{n(n-2)}|x-x_0|^2} \right)^{\frac{n-2}{2}} \Lambda.
\end{equation} 
We also refer interested readers to \cite{dh,ht,h} where the authors studied various counterparts of  system (\ref{main}).   Note  that the following singular function 
\begin{equation}\label{singsol}
u_s(x) = \mathcal A  |x|^{-\frac{2s}{p-1}} 
\ \ \text{for} \ \   \mathcal A\in \mathbb R^m \ \ \text{with} \ \   
| \mathcal A|^{p-1} =2^{2s} \frac{\Gamma(\frac{n}{2}-\frac{s}{p-1}) \Gamma(s+\frac{s}{p-1})}{\Gamma(\frac{s}{p-1}) \Gamma(\frac{n-2s}{2}-\frac{s}{p-1})} ,
  \end{equation}  
solves (\ref{main}) in $\mathbb R^n\setminus\{0\}$ for a supercritical parameter $p$ that is $p>p_S(n, s)$.  Before we state our main results, let us  define the notion of stable solutions.  

     \begin{dfn}
  We say a solution $u$ of (\ref{main}) is  stable outside a compact set if there exists $R_0>0$ such that 
\begin{equation} \label{stability}
\sum_{i=1}^m\int |u|^{p-1} \phi_i^2 +(p-1)\sum_{i,j=1}^{m} \int |u|^{p-3}u_iu_j \phi_i \phi_j \le 
\sum_{i=1}^{m} ||\phi_i||^2_{\dot H^s(\RR)},
\end{equation} 
for any $\phi=(\phi_i)_{i=1}^m$ where $ \phi_i \in C_c^\infty(\mathbb R^n\setminus \overline {B_{R_0}}) $ for $1\le i\le m$. 
  \end{dfn}
  
  Here is our main result.  
  
  \begin{thm}\label{mainthm}
Suppose $0<s\le 2$ and $n> 2s$. Let $u$ be a solution of (\ref{main}) that is stable outside a compact set. Then either for $1<p<p_S(n,s)$ or for $p>p_S(n,s)$ and 
\begin{equation}\label{conditionp} 
p \frac{\Gamma(\frac{n}{2}-\frac{s}{p-1}) \Gamma(s+\frac{s}{p-1})}{\Gamma(\frac{s}{p-1}) \Gamma(\frac{n-2s}{2}-\frac{s}{p-1})}>\frac{ \Gamma(\frac{n+2s}{4})^2 }{\Gamma(\frac{n-2s}{4})^2},
\end{equation}
each component $u_i$ must be identically zero.  For the case of Sobolev critical exponent, that is when $p=p_S(n,s)$, a solution $u$ has finite energy that is
\begin{equation}
  ||u||^{p+1}_{L^{p+1}(\RR)} = ||u||^2_{\dot H^s(\RR)}< \infty. \end{equation}  
In this case,  if in addition $u$ is stable,  then each component $u_i$ must be identically zero.
\end{thm}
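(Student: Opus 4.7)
\medskip

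\noindent\textbf{Proof plan.} The argument follows the by-now standard three-step blueprint used by Farina (for $s=1$), D\'avila--Dupaigne--Wang--Wei (for $s=2$), D\'avila--Dupaigne--Wei (for $0<s<1$) and the first author with Wei (for $1<s<2$), suitably adapted to the system by exploiting the algebraic identity that converts the vector-valued stability condition into a scalar one.

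\smallskip

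\noindent\emph{Step 1: Scalar reduction and energy bounds from stability.} I would first plug the admissible test vector $\phi_i = u_i \eta$ into the stability condition (\ref{stability}), where $\eta$ is a standard cutoff supported outside $B_{R_0}$. Using the identity
\begin{equation*}
\sum_{i=1}^m u_i^2\,|u|^{p-1} \;+\; (p-1)\sum_{i,j=1}^m u_i^2 u_j^2\,|u|^{p-3} \;=\; p\,|u|^{p+1},
\end{equation*}
the two quadratic bulk terms collapse to $p\int|u|^{p+1}\eta^2$, reducing the stability inequality to the scalar statement $p\int |u|^{p+1}\eta^2 \le \sum_i \|u_i\eta\|^2_{\dot H^s}$. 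Combining this with the equation tested against $u_i\eta^2$ (i.e.\ a weighted Pohozaev identity), I would obtain the core integral inequality that controls $\int_{B_R}|u|^{p+1}$ by boundary terms depending on $R$. This is the analogue of the Farina/Davila--Dupaigne bootstrap. Iterating with $\phi_i = u_i|u|^{2\gamma}\eta$ for a suitable exponent $\gamma$ then yields $\int_{\mathbb R^n}|u|^{p+1} < \infty$ in the subcritical and (under (\ref{conditionp})) supercritical ranges.

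\smallskip

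\noindent\emph{Step 2: Blow-down and homogeneity via the monotonicity formulae.} Next, I would introduce the rescaled family
$u^\lambda(x) := \lambda^{2s/(p-1)} u(\lambda x)$,
which again solves (\ref{main}), and correspondingly rescale the extension $v^\lambda$ in the fractional cases. The uniform estimates from Step 1 make $\{u^\lambda\}_{\lambda \geq 1}$ pre-compact in $L^{p+1}_{\mathrm{loc}}$ and in the natural $\dot H^s$ topology, and one extracts a blow-down limit $u^\infty$. By the scaling invariance $E_s(u,\lambda R,x_0)=E_s(u^\lambda,R,0)$ together with the monotonicity of $E_s$ (Theorems \ref{thmonos1}--\ref{thmono2s}), the limit $E_s(u,\infty)$ is finite and $E_s(u^\infty,\cdot,0)$ is constant. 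The explicit form of $dE_s/d\lambda$ given in each of Theorems \ref{thmonos1}--\ref{thmono2s} is a sum of squares that vanishes precisely when
$\partial_r u^\infty_i + \tfrac{2s}{p-1}\tfrac{u^\infty_i}{r} \equiv 0$ for every $i$,
i.e.\ when $u^\infty$ is homogeneous of degree $-2s/(p-1)$.

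\smallskip

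\noindent\emph{Step 3: Classifying homogeneous stable solutions.} Writing $u^\infty(x)=r^{-2s/(p-1)}w(x/|x|)$ and testing the stability condition with $\phi_i = u^\infty_i\,r^{-(n-2s)/2 + 2s/(p-1)}\,\eta(\log r)$, the scalar reduction of Step 1 turns the inequality into
$p \cdot A(n,s,p) \le \Lambda_s$,
where $\Lambda_s$ is the sharp constant in the fractional Hardy/Hardy--Rellich inequality on $\mathbb R^n$ (the ratio $\Gamma(\tfrac{n+2s}{4})^2/\Gamma(\tfrac{n-2s}{4})^2$ on the right-hand side of (\ref{conditionp})) and $A(n,s,p)$ is the $\Gamma$-ratio on the left-hand side. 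Under (\ref{conditionp}) this is a contradiction unless $w\equiv 0$, so $u^\infty\equiv 0$, hence $E_s(u,\infty)=0$; combined with $E_s(u,0^+)\ge 0$ and the monotonicity, one gets $E_s(u,\cdot)\equiv 0$. A final elementary argument (blow-up at a point where $u\not\equiv 0$ contradicting $E_s\equiv 0$) forces $u\equiv 0$. The subcritical range is handled more directly: Step~1 already produces $\int|u|^{p+1}=0$ after a straightforward choice of cutoff, since there are no non-trivial finite-energy solutions when $p<p_S(n,s)$.

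\smallskip

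\noindent\emph{Step 4: The critical exponent.} For $p=p_S(n,s)$, the Sobolev-critical scaling makes $\int|u|^{p+1}$ and $\|u\|^2_{\dot H^s}$ dimensionless. Stability outside a compact set combined with the energy identity coming from testing the equation against $u_i\eta^2$ gives, after passing to the limit $R\to\infty$, exactly $\|u\|^{p+1}_{L^{p+1}}=\|u\|^2_{\dot H^s}<\infty$. Once global stability is assumed, the identity of Step~1 with $\eta\equiv 1$ becomes $(p-1)\|u\|^{p+1}_{L^{p+1}}\le 0$, forcing each $u_i\equiv 0$.

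\smallskip

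\noindent\emph{Main obstacle.} The delicate point is Step~3, the sharp spectral classification of homogeneous stable solutions: one needs to verify that the scalar reduction preserves the \emph{sharp} constant, so that the Joseph--Lundgren condition (\ref{conditionp})---known to be sharp in the scalar case---remains sharp for the system. This amounts to showing that the extremising test vector in (\ref{stability}) may be taken as $\phi_i = u_i\psi$ for a scalar $\psi$, which is precisely what the algebraic identity above guarantees. Handling the fractional cases $0<s<1$ and $1<s<2$ requires doing Steps 2--3 for the extension problems (\ref{main1e}) and (\ref{main2e}) and then projecting the conclusions back to the trace $u$, exactly as in \cite{ddw,fw}.
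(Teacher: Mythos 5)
Your overall architecture matches the paper's: local energy estimates from the quadratic test functions $\phi_i=u_i\eta$, a blow-down analysis using scale-invariance and monotonicity of the Pacard--Weiss type functional to produce a homogeneous limit, a spectral/Hardy classification of stable homogeneous solutions, and a Pohozaev identity for the critical exponent. You have also correctly identified the crucial algebraic identity $\sum_i u_i^2|u|^{p-1}+(p-1)\sum_{i,j}u_i^2u_j^2|u|^{p-3}=p|u|^{p+1}$, which is exactly what collapses the vector stability bilinear form to a scalar Hardy-type inequality and is the key point allowing the scalar arguments to transfer to the system.

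However, there are two places where your write-up departs from, or overstates, what can actually be carried out.

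First, the Moser-type iteration in Step 1 is neither what the paper does nor what the argument needs, and the stated conclusion of the iteration is wrong. The paper derives \emph{only} the scale-natural local estimate $\int_{B_R}|u|^{p+1}\le CR^{\,n-2s\frac{p+1}{p-1}}$ from the test vector $\phi_i=u_i\eta$ (Lemma \ref{lmes}, Corollary \ref{corest}, Lemma \ref{lemvdeltav}); this is all that is fed into the monotonicity formula. For $p<p_S(n,s)$ the exponent is negative and already forces $u\equiv 0$ after $R\to\infty$, with no iteration. For $p>p_S(n,s)$ you \emph{cannot} establish $\int_{\mathbb R^n}|u|^{p+1}<\infty$ from stability alone: the candidate singular solution $u_s(x)=\mathcal A|x|^{-2s/(p-1)}$ from (\ref{singsol}) is stable exactly when (\ref{conditionp}) fails, and even when (\ref{conditionp}) holds, the finiteness of the global $L^{p+1}$-norm is the \emph{conclusion} ($u\equiv 0$), not an intermediate step. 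Farina's iteration proves the Liouville theorem directly via improving exponents; it does not deliver a priori global integrability. Moreover, for systems the cross-terms $\nabla(u_i|u|^{2\gamma})$ involve $\sum_j u_j\nabla u_j$ and do not sum as cleanly as the scalar $\nabla(u|u|^{2\gamma})$; this is precisely why the paper bypasses iteration and relies on the monotonicity/blow-down route instead.

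Second, Step 3 is stated as if the scalar reduction alone turns the stability inequality into the numerical inequality $pA(n,s,p)\le\Lambda_{n,s}$. That omits the genuinely delicate part: plugging the separated test function $\phi_i=r^{-(n-2s)/2}\psi_i(\theta)\eta_\epsilon(r)$ into (\ref{stability}) and the equation for $\psi$ produces, in the fractional cases, spherical integrals with kernels $K_\alpha(\langle\theta,\sigma\rangle)$ at two different exponents $\alpha=\frac{n-2s}{2}$ and $\alpha=\frac{2s}{p-1}$, and the conclusion requires the monotonicity of $\alpha\mapsto K_\alpha$ (the paper's (\ref{KspSim})) to sign the angular term $K_{\frac{n-2s}{2}}-pK_{\frac{2s}{p-1}}<0$. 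In the local cases the analogue is signing the coefficient $p\alpha-\frac{n(n-4)}{2}$ of the $|\nabla\psi_i|^2$ term in Theorem \ref{thmhomo2}. Your ``Main obstacle'' paragraph identifies Step 3 as the hard point but attributes it to the choice of extremising test vector; that is not the issue (the vector $\phi_i=u_i\psi$ always works thanks to the algebraic identity). The work is in controlling the angular part of the $\dot H^s$-norm, not in selecting the test vector. Addressing this explicitly---via the kernel monotonicity argument as in Theorem \ref{homog}---is required to close the gap.
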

As a direct consequence,  the above theorem implies that the only nonnegative solution for system (\ref{twosys}) when $s=1$ is the trivial solution for dimensions $n < 12$ and $n\neq 4$.     Here,  is how this article is structured.  For the rest of this article,  we provide a proof for Theorem \ref{mainthm} considering  various cases for parameter $s>0$.   In Section \ref{ls1},  we consider the case $s=1$ that is when the operator in (\ref{main}) is the local Laplacian operator. In Section \ref{ls2}, we let $s=2$ that refers to the bi-Laplacian operator.  Lastly, in Section \ref{nls},  we consider nonlocal cases $0<s<1$ and $1<s<2$. For these non-integer parameters, the operator in (\ref{main})  is a fractional Laplacian operator. For all cases $0<s\le 2$, we  apply blow-down analysis arguments  and monotonicity formulae. 

  \section{Local Case: Laplacian operator}\label{ls1}
 In this section we assume that $s=1$. Therefore system (\ref{main}) turns into the following form
  \begin{equation} \label{main1}
   -\Delta u_i    =   |u|^{p-1} u_i  \quad  \text{in} \ \  \mathbb{R}^n. 
  \end{equation}  
  Note that this is a particular case of system (\ref{H}). For a general nonlinearity $H: \mathbb R^m\to \mathbb R$ such that $\nabla H\ge 0$, it is proved in \cite{fg} that  bounded stable solutions of (\ref{H}) are constant when $n\le 4$. For radial solutions, this is known to hold in more dimensions that is when $n \le 9$ without any sign assumptions on the nonlinearity, see \cite{mf}.  In addition it is known that at least for the case of radial solutions the dimension $n=9$ is the optimal dimension. For the rest of this section we prove the following classification of finite Morse index solutions of  (\ref{main1}).  
  \begin{thm}\label{thm1stab} 
  Suppose that $u=(u_i)_{i=1}^m$ is a finite Morse index solution of (\ref{main1}) when $m\ge 1$ and $n\ge 3$. Let $1<p<\frac{n+2}{n-2}$ and  $\frac{n+2}{n-2}<p<p_c(n)$ where 
\begin{eqnarray}\label{pc1}
p_c(n)= \left\{ \begin{array}{lcl}
\hfill \infty   \ \ && \text{if } n \le 10,\\
\hfill  \frac{(n-2)^2 -4n+8\sqrt{n-1} }{(n-2)(n-10)} \ \ && \text{if } n \ge 11.
\end{array}\right.
\end{eqnarray}
Then each $u_i$ must be identically zero.  For the Sobolev critical exponent $p=\frac{n+2}{n-2}$, a solution $u$ has finite energy that is
  \begin{equation} 
   \int_{\mathbb R^n} |u|^{p+1}  = \sum_{i=1}^m \int_{\mathbb R^n} |\nabla u_i|^2< \infty. \end{equation}  
In this case,  if in addition $u$ is stable,  then each component $u_i$ must be identically zero.
\end{thm}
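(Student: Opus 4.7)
The plan is to adapt Farina's approach for the scalar Lane--Emden equation in \cite{f1} to the coupled system, with Theorem \ref{thmonos1} providing the vectorial monotonicity that replaces the scalar one. The technical backbone is the following algebraic simplification: whenever the test function is aligned with the solution, the two quadratic terms on the left of the stability inequality \eqref{stability} combine cleanly. Concretely, choosing $\phi_i = u_i\,\psi$ for any scalar $\psi$ yields
\[
\sum_i\int|u|^{p-1}\phi_i^2 + (p-1)\sum_{i,j}\int|u|^{p-3}u_iu_j\phi_i\phi_j = p\int|u|^{p+1}\psi^2,
\]
so the effective coupling factor is exactly $p$, as in the scalar case. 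I would make this the default test throughout.

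For the range $1<p<p_c(n)$ with $p\neq\frac{n+2}{n-2}$ (which covers the subcritical window and the subthreshold part of the supercritical window), I would feed $\psi=|u|^{\alpha-1}\eta_R$ into the stability, with $\eta_R$ a standard cut-off supported in $B_{2R}\setminus B_{R_0}$ and $\alpha>1$ free; in parallel, test the equation against $u_i|u|^{2\alpha-2}\eta_R^2$. Two integrations by parts together with the coupling identity above produce an inequality of the form
\[
A(p,\alpha)\int|u|^{p+2\alpha-1}\eta_R^2 \le C(p,\alpha)\int_{B_{2R}\setminus B_R}|u|^{2\alpha}\bigl(|\nabla\eta_R|^2 + \eta_R|\Delta\eta_R|\bigr),
\]
where $A(p,\alpha)$ is a polynomial in $\alpha$ with $p$-dependent coefficients. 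The condition $p<p_c(n)$ is precisely what allows a choice of $\alpha$ making $A(p,\alpha)>0$ and the right-hand side exponent $n-2(p+2\alpha-1)/(p-1)$ strictly negative; H\"older and $R\to\infty$ then force $u\equiv 0$ outside $B_{R_0}$, and smoothness extends this to $\mathbb R^n$.

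For the supercritical case $\frac{n+2}{n-2}<p<p_c(n)$, I would also set up the blow-down from the monotonicity formula as an independent and more structural route. Setting $u^\lambda(x)=\lambda^{2/(p-1)}u(\lambda x)$, the scaling identity $E_1(u,\lambda,0)=E_1(u^\lambda,1,0)$ together with Theorem \ref{thmonos1} and the finiteness of $E_1(u,\infty,0)$ give, along subsequences, a blow-down limit $u^\infty$ which, by the equality case of the monotonicity identity, satisfies $\partial_r u_i^\infty + \tfrac{2}{p-1}u_i^\infty/r\equiv 0$; hence $u^\infty$ is homogeneous of degree $-2/(p-1)$ and stable on $\mathbb R^n\setminus\{0\}$. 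Writing $u_i^\infty(x)=r^{-2/(p-1)}\omega_i(\theta)$ and testing stability with $\phi_i=r^{-(n-2)/2}\zeta(\log r)\omega_i(\theta)$, the diagonalization of the first paragraph collapses the problem to a one-dimensional Hardy-type inequality whose sharp constant is encoded in the Gamma-function ratio of \eqref{conditionp}; the hypothesis $p<p_c(n)$ makes it fail unless $\omega\equiv 0$, so $u^\infty\equiv 0$. By monotonicity, $E_1(u,\lambda,0)$ is then constant, so $u$ itself is homogeneous of degree $-2/(p-1)$; smoothness at the origin forces $u\equiv 0$.

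The critical case $p=\frac{n+2}{n-2}$ requires modification, since the right-hand-side exponent in the Farina estimate vanishes. Instead, I would use the now scale-invariant monotonicity of $E_1$ to obtain a uniform bound $\int_{B_R}|u|^{p+1}\le C$, passing $R\to\infty$ to get $u\in L^{p+1}(\mathbb R^n)$; a Pohozaev identity on $B_R$ whose boundary terms decay by this $L^{p+1}$ bound then upgrades to $\sum_i\int|\nabla u_i|^2=\int|u|^{p+1}<\infty$. Under the additional global-stability hypothesis, $\phi_i=u_i$ becomes admissible as a limit of cut-off tests and yields $p\int|u|^{p+1}\le\sum_i\int|\nabla u_i|^2$, incompatible with Pohozaev unless $u\equiv 0$. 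The main obstacle I anticipate is the classification of stable homogeneous solutions of the system in the supercritical step: one must check that among all admissible vector-valued test directions on $S^{n-1}$ the $\omega$-aligned direction gives the sharpest obstruction, so that the scalar Hardy comparison yielding $p_c(n)$ is indeed tight. The diagonalization identity of the first paragraph is precisely what makes this comparison transparent.
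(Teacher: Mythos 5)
Your Route 2 (blow-down along the monotonicity formula, classification of stable homogeneous tangent solutions, Pohozaev at the critical exponent) is essentially the paper's own proof; the paper explicitly says it argues via blow-down and defers the details to the fourth-order case (Theorem \ref{thm2stab}), and your test function $\phi_i=r^{-(n-2)/2}\zeta(\log r)\omega_i(\theta)$ is the $s=1$ analogue of the $\phi_i=r^{-(n-4)/2}\psi_i(\theta)\eta_\epsilon(r)$ used in Theorem \ref{thmhomo2}. The diagonalization identity in your opening paragraph (aligned tests collapse the coupled quadratic form to $p\int|u|^{p+1}\psi^2$) is also what the paper exploits, cf.\ (\ref{stable2}).

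Your Route 1, the Farina/Moser iteration with $\phi_i=u_i|u|^{\alpha-1}\eta_R$, is a genuinely different path: the paper mentions that Farina used this in the scalar case but deliberately switches to blow-down for the system (and indeed that choice is what generalizes to the nonlocal $s$). The Moser route does work for the system and with the same exponent threshold, but there is one nontrivial step you pass over silently. Testing the equation against $u_i|u|^{2\alpha-2}\eta^2$ and the stability against $u_i|u|^{\alpha-1}\eta$, and summing in $i$, produces two a priori independent nonnegative densities, $G:=|u|^{2\alpha-2}\sum_i|\nabla u_i|^2$ and $H:=|u|^{2\alpha-2}|\nabla|u||^2$ (these coincide when $m=1$ but not in general), with the combined inequality
\begin{equation*}
(p-1)\int G\,\eta^2+\bigl[p(2\alpha-2)-(\alpha^2-1)\bigr]\int H\,\eta^2\le C\int_{B_{2R}\setminus B_R}|u|^{2\alpha}\bigl(|\nabla\eta|^2+|\eta\,\Delta\eta|\bigr).
\end{equation*}
When $\alpha<2p-1$ both coefficients are positive and the estimate closes directly; when $\alpha\ge 2p-1$ one must invoke the pointwise Kato inequality $H\le G$ (Cauchy--Schwarz on $\sum_iu_i\nabla u_i$) to absorb the negative $H$--term into the $G$--term and recover Farina's scalar threshold $p(2\alpha-1)>\alpha^2$. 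Without this step the statement that ``$A(p,\alpha)$ is a polynomial in $\alpha$'' whose positivity is governed by $p<p_c(n)$ is not justified. If you include the Kato inequality explicitly, Route 1 is a correct, self-contained alternative that sidesteps the monotonicity formula entirely for the noncritical exponents; what you lose is that it does not generalize to the fractional operators, whereas the blow-down framework the paper sets up carries over to $s\in(0,1)\cup(1,2)$, which is presumably why the paper organized the argument that way. Your critical-case sketch (scale-invariant energy gives $u\in L^{p+1}$, Pohozaev upgrades to finite Dirichlet energy, global stability with $\phi_i=u_i$ then forces $u\equiv0$) is consistent with the paper's terse ``apply the Pohozaev identity.''
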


Note that for the case of scalar equations,  that is when $m=1$,  the above theorem is given by Farina in \cite{f1}. He used a Moser iteration type argument for the proof. We refer interested reader to Crandall and Rabinowitz in \cite {cr} for a similar approach.   To prove the above theorem, we apply a blow-down analysis argument as well as the monotonicity formula given as Theorem \ref{thmonos1}.  We now derive a few elliptic estimates.

\begin{lemma}\label{lmes}
Suppose that $u$ is a stable solution of (\ref{main}). Then,  for any $R>1$ 
\begin{equation}
\label{esp} \int_{B_R} |u|^{p+1} \le C R^{n-2\frac{p+1}{p-1}} \ \ \text{and}\ \  \sum_{i=1}^m \int_{B_R} |\nabla u_i|^{2} \le  C R^{n-2\frac{p+1}{p-1}} , 
\end{equation}
where $C$ is a positive constant that is independent from $R$. 
\end{lemma}
\begin{proof}
Test the stability inequality on $\phi_i=u_i \zeta_R$ where $\zeta_R: C_c^\infty(\mathbb R^n)\to \mathbb R$ and $\zeta_R\equiv 1$ on $B_{R}$ and $\zeta_R\equiv 0$ on $\mathbb R^n \setminus B_{2R}$ with $||\zeta_R||_{L^\infty(B_{2R}\setminus B_R)} \le C R^{-1}$. Now multiply both sides of (\ref{main}) with $u_i \zeta_R^2$ and integrate by parts. Equating the inequalities that one gets from this and from stability finishes the proof. 
\end{proof}
Applying the H\"{o}lder's inequality,   we get the following $L^2$ estimate. 
\begin{cor}
Suppose that $u$ is a stable solution of (\ref{main}). Then,  for any $R>1$
\begin{equation}\label{}
\int_{B_R} |u|^{2} \le C R^{n-\frac{4}{p-1}} , 
\end{equation}
where $C$ is a positive constant that is independent from $R$. 
\end{cor}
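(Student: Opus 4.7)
The plan is to interpolate the $L^2$ norm of $u$ between the $L^{p+1}$ estimate proved in Lemma~\ref{lmes} and the trivial volume bound on $B_R$, via H\"older's inequality. Concretely, I would write $|u|^2 = |u|^2 \cdot 1$ on $B_R$ and apply H\"older with the conjugate pair $\frac{p+1}{2}$ and $\frac{p+1}{p-1}$ (these are indeed conjugate since $p>1$ makes both finite and their reciprocals sum to $1$). This yields
\[
\int_{B_R} |u|^2 \;\le\; \Bigl(\int_{B_R} |u|^{p+1}\Bigr)^{\!2/(p+1)} \, |B_R|^{(p-1)/(p+1)}.
\]

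Then I would substitute the bound \eqref{esp} of Lemma~\ref{lmes}, namely $\int_{B_R}|u|^{p+1}\le C R^{\,n-2(p+1)/(p-1)}$, together with $|B_R|=c_n R^n$. A direct algebraic simplification gives the composite exponent of $R$,
\[
\frac{2}{p+1}\!\left(n-\frac{2(p+1)}{p-1}\right) + \frac{n(p-1)}{p+1} \;=\; n-\frac{4}{p-1},
\]
which is precisely the exponent in the statement. The constant remains $R$-independent because both factors contribute only $R$-independent constants.

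There is essentially no substantive obstacle here: this is a routine H\"older interpolation, and the only bookkeeping is identifying the conjugate pair and simplifying the exponent of $R$. More generally, the same argument produces $\int_{B_R}|u|^q \le C R^{\,n-2q/(p-1)}$ for every $q\in [1,p+1]$; the corollary merely records the $q=2$ case, which will be the convenient form for subsequent blow-down analysis and for testing the monotonicity functional $E_1$ of Theorem~\ref{thmonos1}.
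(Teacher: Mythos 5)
Your proof is correct and matches the paper's (implicit) argument exactly: the paper states the corollary follows by applying H\"older's inequality to the $L^{p+1}$ bound \eqref{esp}, and your choice of conjugate exponents $\frac{p+1}{2}$ and $\frac{p+1}{p-1}$ with the volume factor $|B_R|\sim R^n$ is the standard instantiation. The exponent bookkeeping is right, so nothing further is needed.
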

  
  In this part,  we present a classification of stable homogeneous solutions.  This is a key point in our proof of Theorem \ref{thm1stab}. 
  
\begin{thm}
Suppose that $u=(u_i)_{i=1}^m$ for $u_i= r^{-\frac{2}{p-1}} \psi_i(\theta)$ is a stable solution of (\ref{main}). Then,  each $\psi_i$ is identically zero provided $\frac{n+2}{n-2}<p<p_c(n)$ where $p_c(n)$ is given by (\ref{pc1}). 
\end{thm}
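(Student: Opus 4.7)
The plan is to reduce the problem to a spectral inequality on $\mathbb{S}^{n-1}$ and then, inspired by Farina's scalar argument in \cite{f1}, combine stability, the spherical profile equation, and a Kato-type subsolution inequality to force $\psi\equiv 0$. Substituting $u_i = r^{-\alpha}\psi_i(\theta)$ with $\alpha = 2/(p-1)$ into the system gives the spherical equation
\[
-\Delta_{\mathbb{S}^{n-1}}\psi_i + \alpha(n-2-\alpha)\psi_i = |\psi|^{p-1}\psi_i,
\]
and a direct Kato computation shows that $|\psi|$ is a scalar subsolution,
\[
-\Delta_{\mathbb{S}^{n-1}}|\psi| + \alpha(n-2-\alpha)|\psi| \le |\psi|^{p}.
\]
To pass from ambient stability to the sphere, I would test with $\phi_i(x) = \eta(r)\chi_i(\theta)$, separate variables, and saturate the sharp one-dimensional Hardy inequality $\int_0^\infty(\eta')^2 r^{n-1}\,dr \ge \tfrac{(n-2)^2}{4}\int_0^\infty \eta^2 r^{n-3}\,dr$. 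This yields the spherical spectral bound
\[
\int_{\mathbb{S}^{n-1}}\bigl[|\psi|^{p-1}|\chi|^2 + (p-1)|\psi|^{p-3}(\psi\cdot\chi)^2\bigr]\,d\sigma \;\le\; \tfrac{(n-2)^2}{4}\int|\chi|^2\,d\sigma + \int|\nabla_\theta\chi|^2\,d\sigma,
\]
valid for every smooth $\chi : \mathbb{S}^{n-1}\to\mathbb{R}^m$.

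Next, I specialize $\chi_i = |\psi|^{q-1}\psi_i$ for a real parameter $q > 1/2$. A direct computation gives $|\chi|^2 = |\psi|^{2q}$, $(\psi\cdot\chi) = |\psi|^{q+1}$, and $|\nabla_\theta\chi|^2 = |\psi|^{2q-2}|\nabla_\theta\psi|^2 + (q^2-1)|\psi|^{2q-2}|\nabla_\theta|\psi||^2$. Setting $X=\int|\psi|^{p+2q-1}\,d\sigma$, $Y=\int|\psi|^{2q}\,d\sigma$, $Z=\int|\psi|^{2q-2}|\nabla_\theta\psi|^2\,d\sigma$, $W=\int|\psi|^{2q-2}|\nabla_\theta|\psi||^2\,d\sigma$, and $a=\alpha(n-2-\alpha)$, $b=(n-2)^2/4$, the spherical stability becomes $pX \le bY + Z + (q^2-1)W$, which I label $(\mathrm{Stab}_q)$. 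Testing the spherical equation against $|\psi|^{2q-2}\psi_i$ yields $Z + (2q-2)W + aY = X$, labelled $(\mathrm{Eq}_q)$, and testing the Kato subsolution against $|\psi|^{2q-1}$ yields $(2q-1)W + aY \le X$, labelled $(\mathrm{Kato}_q)$.

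Eliminating $Z$ in $(\mathrm{Stab}_q)$ via $(\mathrm{Eq}_q)$ produces $(p-1)X \le (b-a)Y + (q-1)^2 W$, and then bounding $W$ via $(\mathrm{Kato}_q)$ gives the master inequality
\[
\Bigl[\,p-1 - \tfrac{(q-1)^2}{2q-1}\,\Bigr]\,X \;\le\; \Bigl[\,(b-a) - \tfrac{a(q-1)^2}{2q-1}\,\Bigr]\,Y.
\]
The coefficient of $X$ is strictly positive exactly when $q \in \bigl(p-\sqrt{p(p-1)},\,p+\sqrt{p(p-1)}\bigr)$, while the coefficient of $Y$ is strictly negative precisely when $q$ lies outside $\bigl[(b-\sqrt{b(b-a)})/a,\,(b+\sqrt{b(b-a)})/a\bigr]$. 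Since $t\mapsto t+\sqrt{t(t-1)}$ is strictly increasing on $[1,\infty)$, the hypothesis $p < p_c(n)$ — equivalently $b/a < p$ — makes the overlap of these two regions a non-empty open sub-interval of $q>1/2$. For any such $q$ one gets $X \le 0$; combined with $X \ge 0$, this forces $X = 0$, hence $\psi \equiv 0$. The subcritical range $1 < p < \tfrac{n+2}{n-2}$ is vacuous: a nontrivial homogeneous profile $u=r^{-\alpha}\psi$ fails to lie in $L^{p+1}_{\mathrm{loc}}(\mathbb{R}^n)$ near the origin because $\alpha(p+1)\ge n$, and hence cannot be a distributional solution.

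The main technical obstacle is the rigorous justification of the Kato inequality and of the integrations by parts across the nodal set $\{|\psi|=0\}$ on $\mathbb{S}^{n-1}$. This is handled by the standard regularization of $|\psi|$ by $\sqrt{|\psi|^{2}+\epsilon^{2}}$ followed by a limit $\epsilon\to 0$, which is permissible in our range because $2q-1 > 0$ keeps all integrands locally integrable.
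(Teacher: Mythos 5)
Your proof is correct, but it follows a genuinely different and technically heavier route than the one the paper intends (the paper's proof of this theorem is deferred to the fourth-order argument in Theorem~\ref{thmhomo2}, which it declares analogous). Writing $X=\int_{\mathbb{S}^{n-1}}|\psi|^{p+1}$, $Y=\int_{\mathbb{S}^{n-1}}|\psi|^2$, $Z=\sum_i\int_{\mathbb{S}^{n-1}}|\nabla_\theta\psi_i|^2$, $a=\alpha(n-2-\alpha)$ and $b=\tfrac{(n-2)^2}{4}$, the paper simply tests the spherical profile equation against $\psi_i$ to obtain $Z+aY=X$, tests the stability inequality with $\phi_i=r^{-(n-2)/2}\psi_i(\theta)\eta_\epsilon(r)$ to obtain $pX\le bY+Z$, and forms the linear combination $p\cdot(\text{first})-(\text{second})$, which reads $(p-1)Z+(pa-b)Y\le 0$; both coefficients are strictly positive precisely for $\tfrac{n+2}{n-2}<p<p_c(n)$, killing $Z$ and $Y$ in one stroke with no auxiliary inequalities. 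Your route instead tests with $\chi_i=|\psi|^{q-1}\psi_i$, which generates the cross term $W=\int|\psi|^{2q-2}|\nabla_\theta|\psi||^2$, and to control $W$ you must prove a vector Kato subsolution inequality for $|\psi|$, test it against $|\psi|^{2q-1}$, and carry out an interval-overlap analysis in $q$ --- all of which require the $\sqrt{|\psi|^2+\epsilon^2}$-regularisation you flag at the end. This reproduces the same Joseph--Lundgren threshold $p_c(n)$, as it must since that exponent is sharp; moreover the parameter $q$ is dispensable even within your framework, since at $q=1$ the $W$-terms vanish, your master inequality becomes $(p-1)X\le(b-a)Y$, and combined with $X=Z+aY\ge aY$ this already gives $a(p-1)\le b-a$, i.e.\ $pa\le b$, contradicting the hypothesis. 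In short, your argument is a correct adaptation of Farina-style weight iteration to the separated spherical problem, but the extra machinery buys no additional range of $p$ here; it would be of value in settings where the profile equation cannot be paired against $\psi_i$ directly.
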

\begin{proof}
We omit the proof here, since a similar argument will be provided in the proof of Theorem \ref{thmhomo2} for the fourth order case. 

\end{proof}

\noindent {\bf Proof of Theorem \ref{thm1stab}}.  The proof is based on a blow-down analysis and it relies on the monotonicity formula provided as Theorem \ref{thmonos1}.  We omit the details,  since  similar arguments will be provided for the poof of Theorem \ref{thm2stab}.  For the case of Sobolev critical exponent, one can conclude the result via applying the Pohozaev indentity. 

\hfill $ \Box$

  \section{Local Case: Bi-Laplacian Operator}\label{ls2}
In this section,  we consider the following fourth order system 
   \begin{equation} \label{main2}
   \Delta^2 u_i    =   |u|^{p-1} u_i  \quad  \text{in} \ \  \mathbb{R}^n. 
  \end{equation}  
  This section is devoted to the proof of the following theorem. 

\begin{thm}\label{thm2stab}
Suppose that $u=(u_i)_{i=1}^m$ is a finite Morse index solution of (\ref{main2}) when $m\ge 1$ and $n\ge 5$. Let $1<p<\frac{n+4}{n-4}$ and  $\frac{n+4}{n-4}<p<\bar p_c(n)$ where 
\begin{eqnarray}\label{pc2}
\bar p_c(n)= \left\{ \begin{array}{lcl}
\hfill \infty   \ \ && \text{if } n \le 12,\\
\hfill  \frac{ n+2 -\sqrt{ n^2 +4 -n\sqrt{n^2-8n+32} }}{n-6-\sqrt{n^2+4-n\sqrt{n^2-8n+32}}} \ \ && \text{if } n \ge 13.
\end{array}\right.
\end{eqnarray}
Then,  each $u_i$ must be identically zero.  For the Sobolev critical exponent $p=\frac{n+4}{n-4}$, a solution $u$ has finite energy that is
\begin{equation}
 \int_{\mathbb R^n} |u|^{p+1}  = \sum_{i=1}^m \int_{\mathbb R^n} |\Delta u_i|^2< \infty. \end{equation} 
In this case,  if in addition $u$ is stable,  then each component $u_i$ must be identically zero.
\end{thm}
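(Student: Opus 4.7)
The plan is to adapt the scalar argument of Dávila-Dupaigne-Wang-Wei \cite{ddww} to the coupled system, using the monotonicity formula of Theorem \ref{thmonos2} together with a blow-down analysis. The subcritical range $1<p<\frac{n+4}{n-4}$ can be handled by a Farina-type Moser iteration on the stability inequality (\ref{stability}), following the same scheme as in the $s=1$ case above; the interesting and more delicate range is the supercritical one $\frac{n+4}{n-4}<p<\bar p_c(n)$. The global strategy there has three steps: (i) deduce integral energy estimates from the fact that $u$ is stable outside a compact set, (ii) classify stable homogeneous solutions of degree $-4/(p-1)$, and (iii) run a blow-down together with an interior blow-up argument based on the almost-monotonicity of $E_2$, in order to reduce a general stable solution to a homogeneous one, which must then vanish.

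The first step mirrors Lemma \ref{lmes}. Testing (\ref{stability}) with $\phi_i=u_i\zeta^2$ for a cutoff $\zeta$ supported outside $B_{R_0}$, and comparing with the identity obtained by testing (\ref{main2}) against $u_i\zeta^4$ and integrating by parts, one expects to obtain
\begin{equation*}
\sum_{i=1}^m \int_{B_R} (\Delta u_i)^2 \;+\; \int_{B_R} |u|^{p+1} \;\le\; C\, R^{\,n-4\frac{p+1}{p-1}},
\end{equation*}
valid for $R$ large. This controls the rescalings $u^\lambda(x):=\lambda^{\frac{4}{p-1}} u(\lambda x)$, which again solve (\ref{main2}), are stable outside $B_{R_0/\lambda}$, and are uniformly bounded in $H^2_{\mathrm{loc}}(\mathbb{R}^n\setminus\{0\})$ as soon as $p>\frac{n+4}{n-4}$.

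The main obstacle is the classification of stable homogeneous solutions, which I would state as a separate theorem (the ``Theorem \ref{thmhomo2}'' invoked in the Laplacian proof). Writing $u=r^{-\frac{4}{p-1}}\psi(\theta)$, each $\psi_i$ satisfies a coupled fourth-order ODE on $S^{n-1}$ obtained by separating radial and angular variables. For any admissible test function $\phi$ I would split it pointwise in $\mathbb{R}^m$ into a component parallel to $u$ and a component orthogonal to $u$; the parallel component contributes $p\int|u|^{p-1}\eta^2$ to the right-hand side of (\ref{stability}), whereas the perpendicular component contributes only $\int|u|^{p-1}|\phi^\perp|^2$. The perpendicular piece is strictly weaker and adds no new constraint, so stability reduces to a Hardy-Rellich-type inequality with coefficient $p|\psi|^{p-1}$ on the cone, which is exactly the scalar inequality analyzed in \cite{ddww} and pinpoints the critical dimension $\bar p_c(n)$. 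This pointwise decomposition argument, combined with the Emden-Fowler substitution $u(r,\theta)=r^{-\frac{4}{p-1}}\psi(\theta)$ in the coupled system, is the technically most demanding step because one must handle the full tensor structure of the linearized operator $|u|^{p-3}u_i u_j$ on the sphere.

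With the classification in hand, the final assembly follows the abstract scheme sketched in the introduction of the excerpt. The energy bound makes $E_2(u,\cdot,0)$ finite, and Theorem \ref{thmonos2} makes it almost-nondecreasing, so its limit at $\infty$ exists. Passing to a subsequential weak limit $u^\infty$ of $u^\lambda$ and using the scaling identity $E_2(u,\lambda\rho,0)=E_2(u^\lambda,\rho,0)$ forces $E_2(u^\infty,\cdot,0)$ to be constant; the equality case in Theorem \ref{thmonos2} then shows $u^\infty$ is homogeneous of degree $-\frac{4}{p-1}$, and the classification step forces $u^\infty\equiv 0$, so $E_2(u,\infty,0)=0$. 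A corresponding blow-up at the origin (using smoothness of $u$) gives $E_2(u,0^+,0)=0$, monotonicity then collapses $E_2\equiv 0$, and the equality case again makes $u$ itself homogeneous and smooth at $0$, hence $u\equiv 0$. For the critical exponent $p=\frac{n+4}{n-4}$, the plan is to combine a Pohozaev identity on $B_R$ with the above decay estimates to obtain $\int_{\mathbb{R}^n}|u|^{p+1}=\sum_i\int_{\mathbb{R}^n}|\Delta u_i|^2<\infty$, and then, under the global stability assumption, to test (\ref{stability}) against $\phi_i=u_i\chi_R$ with cutoffs $\chi_R\uparrow 1$; passing to the limit yields $(p-1)\int|u|^{p+1}\le 0$ and hence $u\equiv 0$.
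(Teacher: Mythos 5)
Your proposal follows the same three-step scheme as the paper: (i) integral estimates derived by pairing the stability inequality with the equation tested against $u_i$ times a cutoff; (ii) classification of stable homogeneous solutions on $S^{n-1}$ to identify $\bar p_c(n)$; (iii) blow-down using Theorem~\ref{thmonos2} to reduce a general stable solution to a homogeneous one, then to zero. This matches the paper's Lemma~\ref{lemest}/Corollary~\ref{corest}, Theorem~\ref{thmhomo2}, and the four-step blow-down argument. In step (ii), however, the paper does not decompose an arbitrary test function into components parallel and perpendicular to $u$; it simply tests (\ref{stability}) directly with $\phi_i=r^{-\frac{n-4}{2}}\psi_i(\theta)\eta_\epsilon(r)$, which is parallel to $u_i$, and uses the algebraic identity $\sum_i|u|^{p-1}\phi_i^2+(p-1)\sum_{i,j}|u|^{p-3}u_iu_j\phi_i\phi_j=p|u|^{p-1}\sum_i\phi_i^2$ (valid when $\phi\parallel u$) to produce the factor $p$. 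This is cleaner than the decomposition you propose, because the $\dot H^2$ seminorm on the right side of (\ref{stability}) does not split orthogonally along the pointwise parallel/perpendicular decomposition of $\phi$, so the assertion that "the perpendicular piece adds no new constraint" cannot be taken for granted; fortunately, to prove $\psi\equiv 0$ one only needs the parallel test function, so the paper's direct choice sidesteps the issue. It is then essential (and the paper does this, which your sketch leaves implicit by deferring to \cite{ddww}) to pair the stability inequality with the integrated energy identity obtained by multiplying the fourth-order spherical equation for $\psi_i$ by $\psi_i$ and summing; the resulting sign conditions $p-1>0$, $p\alpha-\tfrac{n(n-4)}{2}>0$, $p\beta-\tfrac{n^2(n-4)^2}{16}>0$ produce exactly the range $\tfrac{n+4}{n-4}<p<\bar p_c(n)$. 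Your closing blow-up/blow-down argument and the Pohozaev treatment of the critical exponent agree with the paper, and you actually spell out the final "$E\equiv 0 \Rightarrow u$ homogeneous $\Rightarrow u\equiv 0$" step slightly more explicitly than the paper's Step~4 does.
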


In order to prove the above theorem, we are required to establish some a priori estimates on solutions.  
\begin{lemma}\label{lemest}
Suppose that $u=(u_i)_{i=1}^m$ be a smooth stable solution of (\ref{main2}) and set $w=(w_i)_{i=1}^m$ where $w_i=\Delta u_i$. Then the following estimate holds,
\begin{equation}
\int_{\mathbb R^n} (|w|^2 + |u|^{p+1}) \zeta^2 \le C \int_{\mathbb R^n} |u|^2 \left( |\nabla \Delta \zeta| |\nabla \zeta| + |\Delta \zeta|^2 + |\Delta |\nabla \zeta|^2 |\right) +  |u| |w| |\nabla \zeta|^2 ,
\end{equation}
for a test function $\zeta :C_c^\infty(\mathbb R^n)\to\mathbb R$. 
\end{lemma}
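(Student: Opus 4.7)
My approach is the standard pairing of the stability inequality with a multiplication of the equation by $u_i\zeta^2$, designed so that the leading $\int|w|^2\zeta^2$ term cancels. I begin by testing the stability inequality \eqref{stability} at $s=2$ with $\phi_i=u_i\zeta$. Because $\sum_i u_i^2=|u|^2$ and $\sum_{i,j}u_iu_j\phi_i\phi_j=|u|^4\zeta^2$, while $\|\phi_i\|^2_{\dot H^2}=\int(\Delta\phi_i)^2$, the inequality collapses to
\[p\int|u|^{p+1}\zeta^2\le\sum_i\int\bigl(\Delta(u_i\zeta)\bigr)^2.\]
Next I multiply the equation by $u_i\zeta^2$, sum, and integrate by parts twice to obtain
\[\int|u|^{p+1}\zeta^2=\int|w|^2\zeta^2+B,\qquad B:=2\sum_i\int w_i\nabla u_i\!\cdot\!\nabla\zeta^2+\sum_i\int w_iu_i\Delta\zeta^2.\]

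The central step is to expand $(\Delta(u_i\zeta))^2=(\zeta w_i+2\nabla u_i\!\cdot\!\nabla\zeta+u_i\Delta\zeta)^2$ and to observe that, using $\nabla\zeta^2=2\zeta\nabla\zeta$ and $\Delta\zeta^2=2\zeta\Delta\zeta+2|\nabla\zeta|^2$, the three cross-terms involving $w_i$ sum to precisely $B-2\int(u\!\cdot\!w)|\nabla\zeta|^2$. Substituting the equation identity into the stability inequality therefore annihilates $\int|w|^2\zeta^2$ together with all $w_i$-cross-terms, producing the crucial prefactor $p-1$:
\begin{align*}
(p-1)\!\int|u|^{p+1}\zeta^2\le{}&4\sum_i\int(\nabla u_i\!\cdot\!\nabla\zeta)^2+\int|u|^2(\Delta\zeta)^2\\
&+4\sum_i\int u_i\Delta\zeta\,\nabla u_i\!\cdot\!\nabla\zeta-2\int(u\!\cdot\!w)|\nabla\zeta|^2.
\end{align*}

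The remaining $u$-derivatives on the right are then transferred to $\zeta$ by routine integration by parts. Writing $2u_i\nabla u_i=\nabla(u_i^2)$,
\[4\sum_i\int u_i\Delta\zeta\,\nabla u_i\!\cdot\!\nabla\zeta=2\int\Delta\zeta\,\nabla|u|^2\!\cdot\!\nabla\zeta=-2\int|u|^2\bigl[\nabla\Delta\zeta\!\cdot\!\nabla\zeta+(\Delta\zeta)^2\bigr].\]
For the gradient term I combine the pointwise bound $(\nabla u_i\!\cdot\!\nabla\zeta)^2\le|\nabla u_i|^2|\nabla\zeta|^2$ with the elementary identity $\int|\nabla u_i|^2\eta=-\int u_iw_i\eta+\tfrac12\int u_i^2\Delta\eta$ (obtained by integrating $\int\nabla u_i\!\cdot\!\nabla(u_i\eta)$ by parts) applied to $\eta=|\nabla\zeta|^2$, and the Cauchy--Schwarz bound $\sum_i|u_iw_i|\le|u||w|$. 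Taking absolute values delivers the announced estimate for $\int|u|^{p+1}\zeta^2$. The bound on $\int|w|^2\zeta^2$ then follows at once from the identity $\int|w|^2\zeta^2=\int|u|^{p+1}\zeta^2-B$ and a Young estimate $|B|\le\epsilon\int|w|^2\zeta^2+C_\epsilon\cdot(\mathrm{RHS})$, absorbing $\epsilon\int|w|^2\zeta^2$ on the left for $\epsilon<\tfrac12$.

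The only delicate point, and the whole reason the method works, is the exact matching of the three $w_i$-cross-terms produced by $(\Delta(u_i\zeta))^2$ against the two integration-by-parts terms of $B$: this matching is what yields the factor $(p-1)$ in place of $p$ on the left-hand side, which is the mechanism that ultimately drives the Farina-type classification of stable solutions for this fourth-order system. Everything beyond this cancellation is standard integration by parts and Cauchy--Young.
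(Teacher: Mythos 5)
Your proof is correct and is exactly the argument the paper intends: the paper's own proof consists of the single sentence "Test the stability inequality on $u_i\zeta$, then multiply the $i^{th}$ equation by $u_i\zeta^2$ and equate," which is precisely the route you take; you have simply supplied the (correct) details of the cancellation of $\int|w|^2\zeta^2$, the extraction of the prefactor $p-1$, and the transfer of derivatives from $u$ to $\zeta$.
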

\begin{proof}
Test the stability inequality on $u_i\zeta$ where $\zeta$ is a test function then multiply both sides of the $i^{th}$ equation of (\ref{main}) with $u_i \zeta^2$. Equating these completes the proof. 
\end{proof}

Applying an appropriate test function yields the following estimate. 
\begin{cor}\label{corest}
Under the same assumptions as in (\ref{lemest}), there exists a constant $C$ such that 
\begin{equation}
\int_{B_R(x)} |w|^2 +|u|^{p+1} \le C R^{-4} \int_{  B_{2R}(x)\setminus B_R(x)} |u|^2 + C R^{-2} \int_{  B_{2R}(x)\setminus B_R(x)} |u| |w|,
\end{equation}
and therefore 
\begin{equation}
\int_{B_R(x)} |w|^2+|u|^{p+1} \le C R^{n-4\frac{p+1}{p-1}}.
\end{equation}
\end{cor}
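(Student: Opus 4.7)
\textbf{Proof plan for Corollary \ref{corest}.}

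The first estimate is essentially a direct substitution in Lemma \ref{lemest}. I pick a radial cutoff $\zeta\in C_c^\infty(\mathbb R^n)$ with $\zeta\equiv 1$ on $B_R(x)$, $\zeta\equiv 0$ outside $B_{2R}(x)$, and $|\nabla^k\zeta|\le C_k R^{-k}$ for $k=1,2,3$. All third‐order combinations of $\zeta$ appearing inside the first bracket on the right-hand side of Lemma \ref{lemest}, namely $|\nabla\Delta\zeta||\nabla\zeta|$, $|\Delta\zeta|^2$ and $|\Delta(|\nabla\zeta|^2)|$, are then pointwise bounded by $C R^{-4}\mathbf 1_{A_R}$, and $|\nabla\zeta|^2\le CR^{-2}\mathbf 1_{A_R}$, where $A_R:=B_{2R}(x)\setminus B_R(x)$. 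Substituting into Lemma \ref{lemest} and discarding the positive contribution of $\zeta^2$ outside $B_R(x)$ on the left yields the first displayed inequality of the corollary.

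For the second estimate I set $I(R):=\int_{B_R(x)}(|w|^2+|u|^{p+1})$ and apply Young's inequality pointwise with exponents tuned to the homogeneity of the problem. Using the pair $((p+1)/2,\,(p+1)/(p-1))$, for any $\eta>0$ one has
\begin{equation*}
R^{-4}|u|^2\;\le\;\eta\,|u|^{p+1}+C(\eta)\,R^{-\frac{4(p+1)}{p-1}},
\end{equation*}
and similarly, via the elementary $R^{-2}|u||w|\le\tfrac{1}{2}|w|^2+\tfrac{1}{2}R^{-4}|u|^2$, a second application gives
\begin{equation*}
R^{-2}|u||w|\;\le\;\tfrac{1}{2}|w|^2+\eta\,|u|^{p+1}+C(\eta)\,R^{-\frac{4(p+1)}{p-1}}.
\end{equation*}
Integrating over $A_R$, whose volume is of order $R^n$, and inserting into the first estimate yields, for any $\eta>0$,
\begin{equation*}
I(R)\;\le\;C\eta\bigl[I(2R)-I(R)\bigr]+C_\eta\,R^{n-\frac{4(p+1)}{p-1}}.
\end{equation*}

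The final step is a hole-filling/iteration argument. Adding $C\eta\,I(R)$ to both sides and dividing gives $I(R)\le\theta(\eta)\,I(2R)+C_\eta R^\beta$ with $\theta(\eta):=C\eta/(1+C\eta)\in(0,1)$ and $\beta:=n-4(p+1)/(p-1)$. I choose $\eta$ so small that $\theta(\eta)\cdot 2^\beta<1$; iterating the inequality $K$ times produces
\begin{equation*}
I(R)\;\le\;\theta^{K}I(2^K R)+C\,R^{\beta}\sum_{k=0}^{K-1}(\theta\cdot 2^\beta)^{k},
\end{equation*}
where the geometric sum is bounded independently of $K$. The main technical point (and the only non-trivial one) is to pass to the limit $K\to\infty$: because $u$ is smooth one has $I(R')<\infty$ for every finite $R'$, and a crude preliminary bound of the form $I(R')\le C_0 R'^{n}$ obtained from local $L^\infty$ control together with the equation dominates the factor $\theta^K$ once $\eta$ (and hence $\theta$) has been chosen small enough that $\theta\cdot 2^n<1$. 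This kills the first term in the iteration and yields $I(R)\le C R^\beta$, which is the second displayed inequality of the corollary. The delicate part of the argument, as always in this kind of bootstrap, is precisely the balance between the decay factor $\theta$ and the geometric growth $2^\beta$, which forces $\eta$ to be small enough; all other steps are routine once the correct cutoff and Young exponents have been identified.
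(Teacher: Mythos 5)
Your deduction of the first (annulus) estimate from Lemma~\ref{lemest} is correct: a standard radial cutoff with $|\nabla^k\zeta|\le C_kR^{-k}$ for $k\le 3$, supported in $B_{2R}(x)$ and equal to $1$ on $B_R(x)$, makes every third-order combination $|\nabla\Delta\zeta||\nabla\zeta|$, $|\Delta\zeta|^2$, $|\Delta(|\nabla\zeta|^2)|$ pointwise $\lesssim R^{-4}$ on the annulus $B_{2R}(x)\setminus B_R(x)$, and $|\nabla\zeta|^2\lesssim R^{-2}$ there, which gives the displayed inequality.

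The gap is in the passage from the hole-filled inequality to the polynomial bound. You correctly arrive at $I(R)\le\theta\,I(2R)+C_\theta R^\beta$ with $\theta>0$ small and $\beta=n-4(p+1)/(p-1)$, and the $K$-fold iteration is formally valid. But to let $K\to\infty$ you must show $\theta^K I(2^K R)\to 0$, and for this you invoke ``a crude preliminary bound $I(R')\le C_0 R'^n$ obtained from local $L^\infty$ control together with the equation.'' No such bound exists here: $u$ is only smooth and stable, not assumed bounded, and there is no a priori polynomial growth estimate available for $\int_{B_{R'}}(|w|^2+|u|^{p+1})$ before the corollary is proved (indeed, any such bound is essentially the content of the corollary). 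Since $\theta$ must be fixed once and for all, it cannot be retuned against an unknown growth rate of $I(2^K R)$, and ``$I(R')<\infty$ for each finite $R'$'' gives no decay of $\theta^K I(2^K R)$. The iteration therefore does not close.

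The standard repair (Farina's argument, which is what ``appropriate test function'' in the paper refers to) avoids iteration entirely by working at a single scale. Take $\zeta=\xi^q$ for a plain cutoff $\xi$ and a sufficiently large exponent $q\ge 2(p+1)/(p-1)$. Feeding this into Lemma~\ref{lemest} and absorbing the cross term $R^{-2}\int|u||w|\xi^{2q-2}$ via Young into $\int|w|^2\xi^{2q}$ yields
\begin{equation*}
\int |w|^2\xi^{2q}+\int|u|^{p+1}\xi^{2q}\le C R^{-4}\int_{B_{2R}(x)\setminus B_R(x)}|u|^2\,\xi^{2q-4}.
\end{equation*}
Splitting $|u|^2\xi^{2q-4}=\bigl(|u|^2\xi^{4q/(p+1)}\bigr)\bigl(\xi^{2q-4-4q/(p+1)}\bigr)$ (the second exponent is nonnegative by the choice of $q$) and applying H\"older with exponents $\frac{p+1}{2}$ and $\frac{p+1}{p-1}$ bounds the right side by $CR^{-4+n(p-1)/(p+1)}\bigl(\int|u|^{p+1}\xi^{2q}\bigr)^{2/(p+1)}$. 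Since $2/(p+1)<1$ and the same quantity $\int|u|^{p+1}\xi^{2q}$ sits on the left, one can divide and obtain $\int|u|^{p+1}\xi^{2q}\le CR^{\beta}$ directly, and then recover $\int|w|^2\xi^{2q}\le CR^{\beta}$ as well. The point you miss is that the cutoff power $q$ must be taken large enough that the H\"older-ized term lands back on the very quantity being estimated, at the same scale, so it can be absorbed; a plain cutoff forces you onto the larger ball and into the unbounded iteration.
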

\begin{proof}
Set the test function $\zeta_R\in C^1_c(\mathbb{R}^n)$ in (\ref{lemest}) where $0\le\zeta_R\le1$ being the following test function
 $$\zeta_R(x)=\left\{
                      \begin{array}{ll}
                        1, & \hbox{if $|x|<R$,} \\
                        0, & \hbox{if $|x|>2R$,} 
                                                                       \end{array}
                    \right.$$
satisfying $||\nabla\zeta_R||_{\infty}\le R^{-1}$ and $||\Delta\zeta_R||_{\infty}\le R^{-2}$.

\end{proof}

We now provide classification of stable homogeneous solutions.  Note first that the following Hardy-Rellich inequality with the best constant holds. Suppose that $h: C_c^2(\mathbb R^n)\to\mathbb R$ then 
\begin{equation}\label{Hardy4}
\int_{\mathbb R^n} |\Delta  h|^2 dx \ge \frac{n^2(n-4)^2}{16} \int_{\mathbb R^n} \frac{h^2}{|x|^4}  dx. 
\end{equation} 
This inequality implies that the singular solution given by (\ref{singsol}) is stable if and only if  
\begin{equation}
 p |\mathcal A|^{p-1}= p \alpha(\alpha+2)(n-\alpha-2)(n-\alpha-4)\le  \frac{n^2(n-4)^2}{16}, \end{equation} 
where $\alpha:=\frac{4}{p-1}$ and $\mathcal A$ given in  (\ref{singsol}). 
\begin{thm}\label{thmhomo2}
Suppose that $u=(u_i)_{i=1}^m$ for $u_i= r^{-\frac{4}{p-1}} \psi_i(\theta)$ is a stable solution of (\ref{main2}). Then,  each $\psi_i\equiv 0$ provided $\frac{n+4}{n-4}<p<\bar p_c(n)$ where $\bar p_c(n)$ given by (\ref{pc2}). 
\end{thm}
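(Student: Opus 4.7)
The plan is a Mellin-type separation of variables that reduces the stability inequality to a spectral problem on $S^{n-1}$, following the blueprint of Davila-Dupaigne-Wang-Wei \cite{ddww}, while exploiting an algebraic collapse of the coupled stability terms that is special to the scale-invariant vector system. To begin, I would plug the ansatz $u_i(r,\theta)=r^{-\alpha}\psi_i(\theta)$ with $\alpha=4/(p-1)$ into (\ref{main2}); using $\Delta=\partial_{rr}+\frac{n-1}{r}\partial_r+\frac{1}{r^2}\Delta_{S^{n-1}}$ this produces a fourth-order elliptic system on $S^{n-1}$,
\begin{equation*}
\Delta_{S^{n-1}}^2\psi_i+a\,\Delta_{S^{n-1}}\psi_i+b\,\psi_i=|\psi|^{p-1}\psi_i,\qquad i=1,\ldots,m,
\end{equation*}
with explicit $a=a(n,p)$, $b=b(n,p)$. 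Testing this equation against $\psi_i$, summing over $i$ and integrating on $S^{n-1}$ then yields a spherical Pohozaev-type identity that expresses $\int_{S^{n-1}}|\psi|^{p+1}d\theta$ as an explicit linear combination of $\int_{S^{n-1}}|\psi|^2 d\theta$, $\int_{S^{n-1}}\sum_i|\nabla_{S^{n-1}}\psi_i|^2 d\theta$ and $\int_{S^{n-1}}\sum_i(\Delta_{S^{n-1}}\psi_i)^2 d\theta$; I will use this identity at the end.

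Next, I would test the stability inequality (\ref{stability}) for $s=2$ with the separated vector-valued function $\phi_i(x)=\psi_i(\theta)\eta(r)$ for a radial weight $\eta$ to be chosen. The algebraic observation that drives the whole proof is that
\begin{equation*}
\sum_i|u|^{p-1}\phi_i^2+(p-1)\sum_{i,j}|u|^{p-3}u_iu_j\phi_i\phi_j=p\,r^{-4}|\psi|^{p+1}\eta^2,
\end{equation*}
since each of the two terms on the left individually equals $r^{-4}|\psi|^{p+1}\eta^2$. This collapse reduces the coupled stability inequality to one that is structurally identical to the scalar case and allows me to transplant the radial-test recipe from \cite{ddww} essentially verbatim. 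Expanding $\Delta\phi_i=(L\eta)\psi_i+r^{-2}\eta\,\Delta_{S^{n-1}}\psi_i$ with $L\eta:=\eta''+\frac{n-1}{r}\eta'$ and choosing the Mellin-adapted weight $\eta(r)=r^{-(n-4)/2}\chi(\log r)$ with $\chi\in C_c^\infty(\mathbb{R})$ approximating the indicator of a large interval $[-R,R]$, the radial integrals become $L^2$-integrals of $\chi,\chi',\chi''$ after the substitution $t=\log r$; passing to the limit $R\to\infty$ yields the purely angular inequality
\begin{equation*}
p\!\int_{S^{n-1}}\!|\psi|^{p+1}d\theta\le\int_{S^{n-1}}\!\sum_i(\Delta_{S^{n-1}}\psi_i)^2 d\theta+K_2\!\int_{S^{n-1}}\!\sum_i|\nabla_{S^{n-1}}\psi_i|^2 d\theta+K_0\!\int_{S^{n-1}}\!|\psi|^2 d\theta,
\end{equation*}
with explicit constants $K_0(n,p)$, $K_2(n,p)$.

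Combining this angular inequality with the spherical Pohozaev-type identity from the first step eliminates the $|\psi|^{p+1}$ term and produces a homogeneous quadratic form in $\psi$ on $S^{n-1}$ which must be nonnegative on every stable homogeneous solution. I would then decompose each $\psi_i$ in spherical harmonics and analyze the resulting bilinear form mode-by-mode, using the sharp Hardy-Rellich inequality (\ref{Hardy4}) together with the Poincar\'e inequality on $S^{n-1}$, to show that the form fails to be nonnegative on any spherical-harmonic sector precisely in the range $\frac{n+4}{n-4}<p<\bar{p}_c(n)$, forcing $\psi\equiv 0$. The hard part will be the second step: isolating the sharp constants $K_0,K_2$ from the Mellin limit is the delicate computation already performed in \cite{ddww} for the scalar case, and the payoff of the algebraic collapse is that those numerical values carry over unchanged to the system. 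A secondary subtlety is that the final quadratic form depends on $\psi$ only through $|\psi|^2$, $\sum_i|\nabla_{S^{n-1}}\psi_i|^2$ and $\sum_i(\Delta_{S^{n-1}}\psi_i)^2$, so the vector structure decouples component-by-component and the mode-by-mode analysis reduces to the scalar problem, producing exactly the Joseph-Lundgren-type exponent $\bar{p}_c(n)$ of (\ref{pc2}).
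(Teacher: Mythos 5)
Your proposal is essentially the same as the paper's proof: plug the separated ansatz into (\ref{main2}) to obtain a fourth-order angular system, multiply by $\psi_i$ and integrate over $\mathcal S^{n-1}$ to get a spherical identity, test the stability inequality with $\phi_i=r^{-(n-4)/2}\psi_i(\theta)\eta_\epsilon(r)$ exploiting the algebraic collapse $\sum_i|u|^{p-1}\phi_i^2+(p-1)\sum_{i,j}|u|^{p-3}u_iu_j\phi_i\phi_j=p\,r^{-4}|\psi|^{p+1}\eta_\epsilon^2$ (which you correctly identify as the key observation making the system behave like the scalar equation), and combine the two to eliminate $\int|\psi|^{p+1}$. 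The only place where you deviate is the final step: the paper does \emph{not} decompose in spherical harmonics and does not need a mode-by-mode analysis. After combining the identity and the stability inequality, one gets a linear combination of the three nonnegative quantities $\sum_i\int|\Delta_\theta\psi_i|^2$, $\sum_i\int|\nabla_\theta\psi_i|^2$, and $\int|\psi|^2$ that is $\le 0$; in the range $\frac{n+4}{n-4}<p<\bar p_c(n)$ all three coefficients $p-1$, $p\alpha-\tfrac{n(n-4)}{2}$, $p\beta-\tfrac{n^2(n-4)^2}{16}$ are positive, so each term is forced to vanish and in particular $\psi\equiv 0$. Your spherical-harmonic decomposition and Poincar\'e-on-the-sphere step is an unnecessary detour (and your phrasing ``the form fails to be nonnegative'' has the logic inverted: what you want is that the quadratic form is strictly positive on nonzero $\psi$, contradicting the inequality $\le 0$), but the underlying computation and the sharp Joseph--Lundgren exponent $\bar p_c(n)$ come out the same.
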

\begin{proof} It is straightforward to see that $\psi=(\psi_i)_{i=1}^m$ satisfies 
\begin{equation}\label{deltatheta2psi}
\Delta_\theta^2 \psi_i - \alpha \Delta_\theta \psi_i+\beta \psi_i=|\psi|^{p-1} \psi_i ,
\end{equation}
for
 \begin{eqnarray}
\alpha := (q+2) (n-4-q) +q(n-2-q) \ \ \text{and}\ \ \beta := q(q+2) (n-4-q)(n-2-q),
\end{eqnarray}
where $q:=\frac{4}{p-1}$. Multiplying both sides of (\ref{deltatheta2psi}) with $\psi_i$ and integrating over $\mathcal S^{n-1}$,  we conclude  
\begin{equation}\label{eqpsi}
\sum_{i=1}^m \int_{\mathcal S^{n-1}} \left[ |\Delta \psi_i|^2 + \alpha |\nabla \psi_i|^2 \right]+\beta \int_{\mathcal S^{n-1}} |\psi|^2 =   \int_{\mathcal S^{n-1}} |\psi|^{p+1} . 
\end{equation}
We now test the stability inequality (\ref{stability}) on $\phi=(\phi_i)_{i=1}^m$ for $\phi_i:=r^{-\frac{n-4}{2}} \psi_i(\theta) \eta_\epsilon(r)$. Here, $\eta_\epsilon$ is a standard cut-off function $\eta_\epsilon\in C_c^1(\mathbb R_+)$ at the origin and at infinity that is $\eta_\epsilon=1$ for $\epsilon<r<\epsilon^{-1}$ and $\eta_\epsilon=0$ for either $r<\epsilon/2$ or $r>2/\epsilon$. Applying similar ideas provided in \cite{fw}, we get  
\begin{equation}\label{ineqpsi}
p \int_{\mathcal S^{n-1}} |\psi|^{p+1} \le \sum_{i=1}^m \int_{\mathcal S^{n-1}} \left[|\Delta \psi_i|^2   + \frac{n(n-4)}{2} |\nabla \psi_i|^2\right] + \frac{n^2(n-4)^2}{16} \int_{\mathcal S^{n-1}} |\psi|^2 .
\end{equation}
Combining (\ref{eqpsi}) and (\ref{ineqpsi}) we get 
\begin{equation}
(p-1) \sum_{i=1}^m \int_{\mathcal S^{n-1}} \left[|\Delta \psi_i|^2 + \left(p\alpha- \frac{n(n-4)}{2} \right) |\nabla \psi_i|^2 \right] + \left(p\beta- \frac{n^2(n-4)^2}{16} \right) |\psi|^2 \le 0 .
\end{equation}
Note the coefficients $p-1, p\alpha- \frac{n(n-4)}{2} $ and $p\beta- \frac{n^2(n-4)^2}{16} $ are positive when $\frac{n+4}{n-4} < p < p_c(n)$ where $p_c(n)$ is given by (\ref{pc2}). 
\end{proof}

\noindent {\bf Proof of Theorem \ref{thm2stab}}. The proof of the Sobolev critical case relies on applying the Pohozaev identity and we omit it here. We now provide a sketch of the proof when $p>\frac{n+4}{n-4}$  in a few steps.  
\\
Step 1. $\lim_{r\to\infty} E(u,r,0)<\infty$.   Note that the energy $E(u,r,0)$ is nondecreasing in $r$, as given in Theorem \ref{}. Therefore, 
\begin{equation}\label{Ebound}
E(u,r,0) \le r^{-1} \int_r ^{2r} E(u,t,0) dt \le r^{-2}\int_r ^{2r} \int_t ^{t+r} E(u,\lambda,0) d \lambda dt. 
\end{equation}
Applying estimates given in Corollary \ref{corest}  imply that  the right-hand side of (\ref{Ebound}) is bounded.  
\\
Step 2. Define $u_i^\lambda (x)= \lambda^{\frac{4}{p-1}} u_i(\lambda x)$ for each $1\le i\le m$ where $u=(u_i)_{i=1}^m $ is a stable solution of (\ref{main}).  Then $u_i^\lambda\to u_i^\infty$ where $u_i^\infty\in W^{2,2}_{loc}(\mathbb R^n) \cap L^{p+1}_{loc}(\mathbb R^n)$ and $u^\infty=(u^\infty_i)_{i=1}^m$ is a stable solution of (\ref{main}).    Set $w_i^\lambda(x):=\lambda^{\frac{4}{p-1}+2} w_i(\lambda x)$. From Corollary \ref{corest}, we have 
\begin{equation}
 \int_{B_r(x)} |w^\lambda|^2+|u^\lambda|^{p+1} \le C r^{n-4\frac{p+1}{p-1}}.
\end{equation}
From elliptic estimates, up to a subsequence, $u_i^\lambda\to u^\infty_i$ for each $1\le i\le m$ weakly in $W^{2,2}_{loc}(\mathbb R^n)\cap L^{p+1}_{loc}(\mathbb R^n)$. From compactness embeddings and applying interpolation we arrive at $u_i^\lambda \to u_i^\infty$ in $L^q_{loc}(\mathbb R^n)$ for any $q\in [1,p+1)$.  Note also that $u^\infty$ is a stable solution, since $u^\lambda$ is a stable solution and we can send $\lambda $ to infinity.  
\\
Step 3. $u^\infty$ is a homogeneous solution.  This is a direct consequence of the monotonicity formula and the following fact 
\begin{equation}\label{limE}
\lim_{\lambda\to \infty} [ E(u^\lambda, R , 0) - E(u^\lambda, r , 0) ]=0. 
\end{equation}
The left-hand side of (\ref{limE}) is bounded from below by   
\begin{eqnarray}
 E(u^\lambda, R , 0) - E(u^\lambda, r , 0)  
 &\ge& C(n,p)  \sum_{i=1}^m    \int_{   B_R \setminus B_r }  \left(  \frac{4}{p-1} |x|^{-1} u^\lambda_i+ \frac{\partial u^\lambda_i}{\partial r}\right)^2 |x|^{\frac{8}{p-1}+2-n} dx 
 \\&=& C(n,p)  \sum_{i=1}^m    \int_{   B_R \setminus B_r }  \left(  \frac{4}{p-1} |x|^{-1} u^\infty_i+ \frac{\partial u^\infty_i}{\partial r}\right)^2 |x|^{\frac{8}{p-1}+2-n} dx . 
\end{eqnarray}
This implies that for each $1\le i\le m$, we have
\begin{equation}
u_i^\infty(x)=|x|^{-\frac{4}{p-1}} u^\infty_i \left(  \frac{x}{|x|}\right). 
\end{equation}
This completes the proof of this step. 
\\
Step 4. $\lim_{r\to\infty} E(u,r,0)=0$.  Since each $u_i^\infty$ is a homogenous function,  Theorem \ref{thmhomo2} implies that $u^\infty =0$. Therefore, $\lim_{\lambda \to \infty} u_i^\lambda =0$ strongly in $L^2(B_4)$ for each $i=1,\cdots,m$ that is 
 \begin{equation}
\lim_{\lambda \to \infty} \int_{B_4} |u_i^\lambda|^2 =0 \ \ \text{and }\ \   \lim_{\lambda \to \infty} \int_{B_4} |u_i^\lambda w_i^\lambda| =0.
\end{equation}
Applying  Corollary \ref{corest},  we conclude  
\begin{equation}
 \lim_{\lambda \to \infty} \int_{B_4} |w^\lambda|^2 +  |u^\lambda|^{p+1}  =0.
\end{equation}
On the other hand, there exists $r_0>0$ such that 
\begin{equation} \lim _{i\to \infty} ||u^{\lambda_i}||_{W^{2,2}(\partial B_{r_0})}=0.\end{equation}
This implies that 
\begin{equation} \lim _{i\to \infty} E(u,\lambda_i r_0,0) =  \lim _{i\to \infty} E(u^{\lambda_i},r_0,0) .\end{equation}
The fact that $E$ is nondecreasing, that is given as a monotonicity formula in Theorem \ref{thmonos2},   completes the proof. 

\hfill $ \Box$

 \section{Nonlocal Case: Lower and higher order fractional Laplacian}\label{nls}
 
 In this section, we consider system (\ref{main}) with the fractional Laplacian operator $(-\Delta)^s$ where $s\in(0,2)$ for $s\neq1$ and we establish Theorem \ref{mainthm}.   We first note that the following Hardy inequality holds for $n>2s$
\begin{equation}\label{Hardys}
\int_{\mathbb R^n} |\xi|^{2s} |\hat h|^2 d\xi > \Lambda_{n,s} \int_{\mathbb R^n} |x|^{-2s} h^2 dx,
\end{equation}
for any $h \in C_c^\infty(\mathbb R^n)$ where the optimal constant is given by 
\begin{equation}
 \Lambda_{n,s}=2^{2s}\frac{ \Gamma(\frac{n+2s}{4})^2  }{ \Gamma(\frac{n-2s}{4})^2}.
 \end{equation}
 Note that $\Lambda_{n,2}=\frac{n^2(n-4)^2}{16} $  meaning  (\ref{Hardys}) recovers (\ref{Hardy4}) for $s=2$.  For more information interested readers are encouraged to see  \cite{h} by Herbst  (and also \cite{ya}).  We now provide a classification result for homogeneous solutions. Note that for the case of scalar equations, that is when $m=1$, this classification was given in \cite{fw}.  Note also that the proof is valid  regardless of magnitude of the parameter $s$. 
   \begin{thm}\label{homog}
 Suppose  $u_i=r^{-\frac{2s}{p-1}} \psi_i(\theta)$ is a stable solution of (\ref{main}) for  $s\in(0,2)$ and $s\neq1$. Then,  each $\psi_i$ vanishes identically, provided
 $p>\frac{n+2s}{n-2s}$ and 
\begin{equation}
 p \frac{\Gamma(\frac{n}{2}-\frac{s}{p-1}) \Gamma(s+\frac{s}{p-1})}{\Gamma(\frac{s}{p-1}) \Gamma(\frac{n-2s}{2}-\frac{s}{p-1})}>\frac{ \Gamma(\frac{n+2s}{4})^2 }{\Gamma(\frac{n-2s}{4})^2} .  \end{equation}
 \end{thm}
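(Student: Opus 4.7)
\medskip

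\noindent\textbf{Proof proposal for Theorem \ref{homog}.}
The plan is to mirror the argument of Theorem \ref{thmhomo2}: reduce the problem to a spectral comparison on the sphere between the operator induced by $(-\Delta)^s$ on homogeneous functions of degree $-\alpha$, with $\alpha=\tfrac{2s}{p-1}$, and the operator governing the sharp Hardy inequality \eqref{Hardys}. The vector structure will decouple because $|u|^{p-1}$ is an isotropic multiplier.

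\emph{Step 1: reduce the equation to the sphere.}
For any spherical harmonic $Y_k$ of degree $k\ge 0$, the Riesz-type identity
$$
(-\Delta)^s\bigl(r^{-\alpha} Y_k(\theta)\bigr) \;=\; \lambda_k(\alpha,s)\, r^{-\alpha-2s}Y_k(\theta),
\quad
\lambda_k(\alpha,s)=2^{2s}\frac{\Gamma\!\bigl(\tfrac{k+\alpha+2s}{2}\bigr)\Gamma\!\bigl(\tfrac{n+k-\alpha}{2}\bigr)}{\Gamma\!\bigl(\tfrac{k+\alpha}{2}\bigr)\Gamma\!\bigl(\tfrac{n+k-\alpha-2s}{2}\bigr)},
$$
defines a (positive, diagonal in spherical harmonics) operator $A_s^{\alpha}$ on $L^2(\mathbb S^{n-1})$. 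Plugging $u_i=r^{-\alpha}\psi_i(\theta)$ into \eqref{main} and matching homogeneities gives $A_s^{\alpha}\psi_i=|\psi|^{p-1}\psi_i$ on $\mathbb S^{n-1}$. Multiplying by $\psi_i$, summing in $i$, and integrating yields
$$
\sum_{i=1}^m\langle A_s^\alpha\psi_i,\psi_i\rangle_{L^2(\mathbb S^{n-1})}
\;=\;\int_{\mathbb S^{n-1}}|\psi|^{p+1}\,d\theta.
$$

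\emph{Step 2: test the stability inequality.}
Take $\phi_i(x)=r^{-(n-2s)/2}\psi_i(\theta)\,\eta_\varepsilon(r)$ with $\eta_\varepsilon$ a standard logarithmic cut-off supported in $[\varepsilon,\varepsilon^{-1}]$. A direct computation using $|u|^{p-1}=r^{-2s}|\psi|^{p-1}$ and the algebraic identity $\sum_{i,j}|\psi|^{p-3}\psi_i^2\psi_j^2=|\psi|^{p+1}$ gives
$$
\sum_{i=1}^m\int_{\mathbb R^n}|u|^{p-1}\phi_i^2+(p-1)\sum_{i,j=1}^m\int_{\mathbb R^n}|u|^{p-3}u_iu_j\phi_i\phi_j
=p\,C_\varepsilon\int_{\mathbb S^{n-1}}|\psi|^{p+1}\,d\theta,
$$
where $C_\varepsilon:=\int_0^\infty r^{-1}\eta_\varepsilon(r)^2\,dr\to\infty$. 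On the right side of \eqref{stability} I will use the refined spherical Hardy identity
$$
\sum_{i=1}^m\|\phi_i\|_{\dot H^s(\mathbb R^n)}^2
= C_\varepsilon\sum_{k\ge 0}\mu_k\sum_{i=1}^m\|\psi_{i,k}\|_{L^2(\mathbb S^{n-1})}^2 + o(C_\varepsilon),
\qquad
\mu_k:=2^{2s}\frac{\Gamma(\tfrac{k}{2}+\tfrac{n+2s}{4})^2}{\Gamma(\tfrac{k}{2}+\tfrac{n-2s}{4})^2},
$$
where $\psi_{i,k}$ is the projection of $\psi_i$ onto the $k$-th spherical harmonic eigenspace. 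Here $\mu_k=\lambda_k(\tfrac{n-2s}{2},s)$, and $\mu_0=\Lambda_{n,s}$ is exactly the sharp Hardy constant in \eqref{Hardys}. Dividing by $C_\varepsilon$ and sending $\varepsilon\to 0$ produces
$$
p\int_{\mathbb S^{n-1}}|\psi|^{p+1}\,d\theta\;\le\;\sum_{k\ge 0}\mu_k\sum_{i=1}^m\|\psi_{i,k}\|_{L^2(\mathbb S^{n-1})}^2.
$$

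\emph{Step 3: spectral comparison.}
Expanding the equality of Step 1 in spherical harmonics gives $\int|\psi|^{p+1}=\sum_{k\ge 0}\lambda_k(\alpha,s)\sum_i\|\psi_{i,k}\|^2$, so the combination of the two displays yields
$$
\sum_{k\ge 0}\bigl(p\,\lambda_k(\alpha,s)-\mu_k\bigr)\sum_{i=1}^m\|\psi_{i,k}\|_{L^2(\mathbb S^{n-1})}^2\;\le\;0.
$$
The hypothesis is exactly $p\,\lambda_0(\alpha,s)>\mu_0$; the ratio $\mu_k/\lambda_k(\alpha,s)$ can be shown to be nonincreasing in $k\ge 0$ by a short argument on ratios of Gamma functions (comparing $\Gamma(a+\tfrac{k}{2})^2\,\Gamma(b+\tfrac{k}{2})\Gamma(c+\tfrac{k}{2})$ with $\Gamma(a'+\tfrac{k}{2})\Gamma(b'+\tfrac{k}{2})\Gamma(c'+\tfrac{k}{2})^2$ for appropriate shifts), so $p\lambda_k(\alpha,s)>\mu_k$ holds for every $k$. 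This forces every $\psi_{i,k}\equiv 0$, hence $\psi_i\equiv 0$.

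\emph{Main obstacle.}
The genuinely technical point is the spherical-harmonic decomposition of the $\dot H^s$ seminorm claimed in Step 2: identifying the operator on the sphere whose eigenvalues are the $\mu_k$ above, and proving that its quadratic form captures the leading-order $C_\varepsilon$-term in $\|r^{-(n-2s)/2}\psi(\theta)\eta_\varepsilon(r)\|_{\dot H^s}^2$. For $0<s<1$ and $1<s<2$ this is most cleanly done via the respective extension problems \eqref{main1e}, \eqref{main2e}, where $\dot H^s$ is replaced by the weighted Dirichlet/bi-Laplacian energy of the extension and the cross-sectional operator on the sphere becomes explicit; the scalar version for $m=1$ is carried out in \cite{fw} and the vector case reduces to it componentwise because the stability bilinear form has just been diagonalized into the scalar $\dot H^s$ norms of the $\phi_i$.
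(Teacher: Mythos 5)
Your proposal is correct in outline and reaches the right conclusion, but by a genuinely different route from the paper's proof. The paper never decomposes into spherical harmonics. Using the substitution $|y|=t|x|$ it writes the equation's sphere operator as the constant $A_{n,s}$ (your $\lambda_0(\alpha,s)$) plus a L\'evy-type quadratic form against the kernel $K_{2s/(p-1)}$, and the $\dot H^s$ form of the cut-off test function as the constant $\Lambda_{n,s}=\mu_0$ plus the kernel $K_{(n-2s)/2}$. Subtracting $p$ times the first from the second gives a nonnegative quantity whose scalar part has coefficient $\Lambda_{n,s}-pA_{n,s}$ (negative by hypothesis) and whose kernel part has the pointwise kernel $K_{(n-2s)/2}-pK_{2s/(p-1)}$, which is negative because $K_\beta$ is seen from its explicit integral representation to be decreasing in $\beta$ on $[0,(n-2s)/2]$ and $p>1$. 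No mode-by-mode comparison enters.

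Your Step 3 instead hinges on the ratio $\mu_k/\lambda_k(\alpha,s)$ being nonincreasing in $k$. This is true, and can be proved as follows: taking the logarithmic derivative in $k$ and using the explicit $\Gamma$-formulas, the claim reduces to $2\psi'(\tfrac{n}{4}+\tfrac{k}{2})\le \psi'(\tfrac{\alpha+s}{2}+\tfrac{k}{2})+\psi'(\tfrac{n-\alpha-s}{2}+\tfrac{k}{2})$, which holds by convexity of the trigamma $\psi'$ because $\tfrac{n}{4}$ is the midpoint of $\tfrac{\alpha+s}{2}$ and $\tfrac{n-\alpha-s}{2}$. So the gap is fillable, though it is a different and slightly more delicate fact than the paper needs; the paper's pointwise kernel comparison is already a one-line integral observation. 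Likewise, your ``main obstacle'' --- the Funk--Hecke decomposition of $\|\phi_i\|_{\dot H^s}^2$ into the modal weights $\mu_k$ --- is simply bypassed by the paper, which only needs to read off the leading $\int r^{-1}\eta_\epsilon^2\,dr$ coefficient as a quadratic form against the explicit kernel $K_{(n-2s)/2}$, not to diagonalize that form.

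In short, both proofs test stability on the same $\phi_i=r^{-(n-2s)/2}\psi_i(\theta)\eta_\epsilon(r)$ and compare the two resulting quadratic forms on the sphere; your version diagonalizes both and compares eigenvalue by eigenvalue, while the paper compares them as ``constant plus positive kernel'' and uses a single pointwise inequality. The paper's path is shorter and self-contained; yours is arguably more transparent spectrally (it isolates the sharp dichotomy at $k=0$) but at the cost of two additional technical lemmas.
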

 \begin{proof}
 Since $u=(u_i)_{i=1}^m$ is a  solution of (\ref{main}),  each $\psi_i$ satisfies 
 \begin{eqnarray}
\ \ \  |x|^{-\frac{2ps}{p-1}} |\psi|^{p-1}(\theta) \psi_i(\theta)
&=& \int \frac{  |x|^{-\frac{2s}{p-1}} \psi_i(\theta) -|y|^{-\frac{2s}{p-1}} \psi_i(\sigma) }{ |x-y|^{n+2s}} dy 
\\ &=& \nonumber  |x|^{-\frac{2ps}{p-1}}  [  \int \frac{  \psi_i(\theta) - t^{-\frac{2s}{p-1}} \psi_i(\theta) }{ (t^2+1- 2t <\theta,\sigma>)^{\frac{n+2s}{2}} }  t^{n-1} dt d\sigma 
\\&&+ \nonumber \int \frac{  t^{-\frac{2s}{p-1}} (\psi_i(\theta) -  \psi_i(\sigma) }{ (t^2+1- 2t <\theta,\sigma>)^{\frac{n+2s}{2}} }  t^{n-1} dt d\sigma]  , 
\end{eqnarray}
where we have used the change of variable $|y|=t|x|$. Simplifying the above, for each $i$,  we obtain 
 \begin{equation}\label{AnsEq}
 \psi_i(\theta) A_{n,s} + \int_{\mathbb S^{n-1}} K_{\frac{2s}{p-1}} (<\theta,\sigma>) (\psi_i(\theta)-\psi_i(\sigma)) d \sigma= |\psi|^{p-1}(\theta) \psi_i(\theta) ,  
 \end{equation}
 where 
  \begin{equation}\label{Ans}
A_{n,s}:=\int_0^\infty \int_{\mathbb S^{n-1}} \frac{1-t^{  -\frac{2s}{p-1} }}{    (t^2+1- 2t <\theta,\sigma>)^{\frac{n+2s}{2}} } t^{n-1} d\sigma dt, 
 \end{equation}
 and 
   \begin{equation}\label{Ksp}
  K_{\frac{2s}{p-1}}(<\theta,\sigma>):=\int_0^\infty \frac{t^{  n-1-\frac{2s}{p-1} }}{    (t^2+1- 2t <\theta,\sigma>)^{\frac{n+2s}{2}} }  dt . 
 \end{equation}
 Multiplying  (\ref{AnsEq}) with $\psi_i$ and integrating we get 
 \begin{equation}\label{Ans2}
\int_{\mathbb S^{n-1}} |\psi|^2(\theta) A_{n,s} + \int_{\mathbb S^{n-1}} K_{\frac{2s}{p-1}} (<\theta,\sigma>) |\psi(\theta)-\psi(\sigma)|^2 d\theta d \sigma= \int_{\mathbb S^{n-1}} |\psi|^{p+1}(\theta) d\theta. 
  \end{equation}
We now test the stability inequality (\ref{stability})  for $\phi_i (x)=r^{-\frac{n-2s}{2}} \psi _i (\theta) \eta_\epsilon(r)$ and $u_i=r^{-\frac{2s}{p-1}} \psi_i(\theta)$ with the same  $\eta_\epsilon(r)$ as the one given in the proof of Theorem \ref{thmhomo2}.  Applying similar ideas provided in \cite{fw}, we conclude 
\begin{equation}\label{lambdansin}
  \Lambda_{n,s} \int_{\mathbb S^{n-1}} |\psi|^2 + \int_{\mathbb S^{n-1}} K_{\frac{n-2s}{2}}(<\theta,\sigma>) |\psi(\theta)-\psi(\sigma)|^2 d\sigma \ge p  \int_{\mathbb S^{n-1}} |\psi|^{p+1} , 
 \end{equation}
when  
\begin{equation}
\Lambda_{n,s} :=\int_0^\infty  \int_{\mathbb S^{n-1}}\frac{1-t^{  \frac{n-2s}{2} }}{    (t^2+1- 2t <\theta,\sigma>)^{\frac{n+2s}{2}} }  t^{n-1} d\sigma dt.
\end{equation}
Combining (\ref{lambdansin})  and (\ref{Ans2}), we end up with 
  \begin{equation}
  ( \Lambda_{n,s} -p A_{n,s}) \int_{\mathbb S^{n-1}} |\psi|^2 + \int_{\mathbb S^{n-1}} (K_{\frac{n-2s}{2}} - pK_{\frac{2s}{p-1}}  )(<\theta,\sigma>) |\psi(\theta)-\psi(\sigma)|^2 d\sigma \ge 0 . 
  \end{equation}
The fact that $K_\alpha$ is decreasing in $\alpha$ implies $K_{\frac{n-2s}{2}} < K_{\frac{2s}{p-1}}$ for $p>\frac{n+2s}{n-2s}$. Therefore,  $K_{\frac{n-2s}{2}} - pK_{\frac{2s}{p-1}} <0$. On the other hand the assumption of the theorem implies that $\Lambda_{n,s} -p A_{n,s}<0$. Therefore, each $\psi_i$ vanishes identically. This completes the proof. 

 \end{proof}

  \subsection{Lower Order Fractional Laplacian Operator}
  In this part,  we show that Theorem \ref{mainthm} holds when $0<s<1$.   To do so we provide the following estimate first. 
 \begin{lemma}\label{lemv}
Suppose that $p\neq \frac{n+2s}{n-2s}$. Let $u$ be a solution of (\ref{main}) that is stable outside a ball $B_{R_0}$ and $v$ satisfies (\ref{main1e}). Then there exists a constant $C>0$ such that
  \begin{equation}
 \int_{B_R} y^{1-2s} |v|^2 \le C R^{n+2-2s \frac{p+1}{p-1}}, 
   \end{equation}
 for any $R>3R_0$.
\end{lemma}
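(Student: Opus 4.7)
The plan is to prove this in two stages: first deduce the standard $L^{p+1}$ bound on $u$ over balls in $\mathbb{R}^n$ from the stability hypothesis, then lift it to the claimed weighted $L^2$ bound on $v$ via the Poisson representation of the extension.

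\emph{Stage 1 (stability bound on $u$).} Fix a cutoff $\eta \in C_c^\infty(\mathbb{R}^n \setminus \overline{B_{R_0}})$ with $\eta \equiv 1$ on $B_R \setminus B_{2R_0}$, vanishing outside $B_{2R}$, and $|\nabla^j \eta| \le C R^{-j}$. Test (\ref{stability}) with $\phi_i = u_i \eta$; using the vector identity $\sum_{i,j}(u_i u_j)^2 = |u|^4$, the left-hand side collapses to $p \int |u|^{p+1} \eta^2$. Pairing with the integrated equation $\sum_i \int u_i \eta^2 (-\Delta)^s u_i = \int |u|^{p+1}\eta^2$ and applying the standard fractional commutator manipulation (as in the scalar case, see \cite{ddw,fw}), after an absorption argument one obtains
\begin{equation*}
\int_{B_R \cap \mathbb{R}^n} |u|^{p+1} \le C R^{\,n - 2s(p+1)/(p-1)}, \qquad R > 3R_0,
\end{equation*}
and hence by Hölder's inequality
\begin{equation*}
\int_{B_R \cap \mathbb{R}^n} |u|^2 \le C R^{\,n - 4s/(p-1)}.
\end{equation*}

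\emph{Stage 2 (Poisson representation).} Under the regularity built into (\ref{main1e}), each component $v_i$ coincides with the Poisson extension of $u_i$,
\begin{equation*}
v_i(x,y) = c_{n,s} \int_{\mathbb{R}^n} \frac{y^{2s}}{(|x - x'|^2 + y^2)^{(n+2s)/2}}\, u_i(x') \, dx',
\end{equation*}
where $c_{n,s}$ is chosen so that the kernel integrates to $1$ in $x'$. Cauchy--Schwarz against this probability measure gives
\begin{equation*}
|v_i(x,y)|^2 \le C \int_{\mathbb{R}^n} \frac{y^{2s}\, u_i(x')^2}{(|x-x'|^2 + y^2)^{(n+2s)/2}} \, dx'.
\end{equation*}
Summing in $i$, multiplying by $y^{1-2s}$, and integrating over $B_R \cap \mathbb{R}^{n+1}_+$ via Fubini yields
\begin{equation*}
\int_{B_R \cap \r} y^{1-2s} |v|^2 \le C \int_{\mathbb{R}^n} |u(x')|^2 \, I_R(x') \, dx', \qquad I_R(x') := \int_{B_R \cap \r} \frac{y \, dx \, dy}{(|x - x'|^2 + y^2)^{(n+2s)/2}}.
\end{equation*}

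\emph{Stage 3 (kernel estimate).} Split the $x'$-integral into $|x'| \le 2R$ and $|x'| > 2R$. In the first regime, translation and spherical coordinates on a ball of radius $\le 3R$ give $I_R(x') \le C R^{2-2s}$, so the contribution is bounded by $C R^{2-2s} \cdot R^{n-4s/(p-1)} = C R^{\,n+2-2s(p+1)/(p-1)}$. In the second regime, $|x - x'| \ge |x'|/2$ on $B_R$ together with $\int_{B_R \cap \r} y \le C R^{n+2}$ yield $I_R(x') \le C R^{n+2} |x'|^{-(n+2s)}$, and a dyadic decomposition gives
\begin{equation*}
C R^{n+2} \sum_{k \ge 1} (2^k R)^{-(n+2s)} \int_{B_{2^{k+1}R} \cap \mathbb{R}^n} |u|^2 \le C R^{\,n+2-2s(p+1)/(p-1)} \sum_{k \ge 1} 2^{-k \cdot 2s(p+1)/(p-1)},
\end{equation*}
which converges since the geometric-series exponent is negative. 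Adding the two contributions yields the claim.

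The principal technical obstacle is justifying the Poisson representation in Stage 2 in the absence of global decay on $u$: two finite-energy solutions of the degenerate interior problem with the same trace can in principle differ by a $y^{2s-1}$-homogeneous harmonic term. This is ruled out by the conormal regularity $y^{1-2s}\partial_y v_i \in C(\overline{\r})$ encoded in (\ref{main1e}). Once the representation is available, the remainder of the argument is direct integration. Note also that the hypothesis $p \ne (n+2s)/(n-2s)$ is used only at the absorption step in Stage 1.
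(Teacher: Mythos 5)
The paper itself gives no proof of this lemma: it is stated and immediately deferred, first to the $1<s<2$ subsection and from there to Lemmas 4.5--4.6 of \cite{fw}, which in turn follow the scalar argument of \cite{ddw}. So there is no in-paper proof to compare against; what can be checked is whether your argument is sound and whether it reproduces the intended route from those references. It does, and it is.

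Concretely, your Stage~1 computation of the left-hand side of the stability inequality with $\phi_i=u_i\eta$ is right: the coupling term collapses because $\sum_{i,j}u_i^2u_j^2=|u|^4$, giving $p\int|u|^{p+1}\eta^2$, and pairing with the equation tested against $u_i\eta^2$ via the pointwise identity $(a-b)(a\eta_1^2-b\eta_2^2)=(a\eta_1-b\eta_2)^2-ab(\eta_1-\eta_2)^2$ produces the commutator term to absorb; this is exactly the scalar mechanism of \cite{ddw,fw}, and you correctly flag that $p\neq\frac{n+2s}{n-2s}$ is what makes the absorption/bootstrap close. H\"{o}lder then gives $\int_{B_R}|u|^2\le CR^{n-\frac{4s}{p-1}}$. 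In Stage~2, the Caffarelli--Silvestre kernel $c_{n,s}y^{2s}(|x-x'|^2+y^2)^{-\frac{n+2s}{2}}$ is indeed a probability density in $x'$, so Cauchy--Schwarz and Fubini reduce the claim to bounding $I_R(x')=\int_{B_R\cap\r}y\,(|x-x'|^2+y^2)^{-\frac{n+2s}{2}}\,dx\,dy$. Your Stage~3 split is correct: for $|x'|\le 2R$ polar coordinates give $I_R\lesssim\int_0^{CR}\rho^{1-2s}d\rho\lesssim R^{2-2s}$ (using $s<1$), and $R^{2-2s}\cdot R^{n-\frac{4s}{p-1}}=R^{n+2-2s\frac{p+1}{p-1}}$; for $|x'|>2R$ you get $I_R\lesssim R^{n+2}|x'|^{-(n+2s)}$ and the dyadic sum has ratio $2^{-2s\frac{p+1}{p-1}}<1$, again yielding the claimed power. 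One small remark: the uniqueness concern you raise at the end is not really an obstacle. Equation (\ref{main1e}) is an existence statement, and the $v$ it produces is, by construction in Caffarelli--Silvestre, precisely the Poisson extension of $u$; you may simply take that as the definition of $v$ rather than arguing that any admissible $v$ must coincide with it.
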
 
 \begin{lemma}\label{lemvdeltav}
  Let $u$ be a solution of (\ref{main}) that is stable outside a ball $B_{R_0}$ and $v$ satisfies (\ref{main1e}). Then there exists a positive constant $C$  such that
 \begin{equation}
\int_{B_R\cap\partial\mathbb R_+^{n+1}}  |v|^{p+1} +  \sum_{i=1}^m \int_{B_R\cap\mathbb R_+^{n+1}}  y^{1-2s} |\nabla v_i|^2   \le C R^{n-2s \frac{p+1}{p-1}}. 
  \end{equation}
\end{lemma}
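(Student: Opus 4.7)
\textbf{Proof plan for Lemma \ref{lemvdeltav}.} The plan is to mimic the local stable-solution test scheme of Lemma \ref{lmes}, but carried out on the extended function $v$ with the weight $y^{1-2s}$. Fix a cutoff $\eta\in C_c^\infty(\mathbb R^{n+1})$ whose precise form will be chosen at the end and set $\eta_0:=\eta|_{y=0}$. Multiplying the $i$-th equation of \eqref{main1e} by $v_i\eta^2$ and integrating by parts over $\r$, using both $\nabla\cdot(y^{1-2s}\nabla v_i)=0$ inside $\r$ and the Neumann condition $-\lim_{y\to 0}y^{1-2s}\partial_y v_i=\kappa_s|v|^{p-1}v_i$ on $\br$, produces after summing in $i$ the identity
\begin{equation*}
I+2\,II \;=\; \kappa_s\int_{\br}|v|^{p+1}\eta_0^2,
\end{equation*}
where $I:=\sum_i\int_{\r}y^{1-2s}|\nabla v_i|^2\eta^2$ and $II:=\sum_i\int_{\r}y^{1-2s}\,\eta v_i\,\nabla v_i\cdot\nabla\eta$.

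Next I would feed the test function $\phi_i:=u_i\eta_0$ into the stability inequality \eqref{stability}; provided $\eta$ vanishes in a neighbourhood of $B_{R_0}\cap\br$ each $\phi_i$ is admissible. Because $v_i\eta$ is an admissible extension of $u_i\eta_0$ to $\r$, the Caffarelli–Silvestre characterization of $\|\cdot\|_{\dot H^s}$ (the harmonic extension minimizes the weighted Dirichlet energy) yields
\begin{equation*}
\|u_i\eta_0\|_{\dot H^s(\mathbb R^n)}^2\;\le\;\kappa_s\int_{\r}y^{1-2s}|\nabla(v_i\eta)|^2\,dxdy.
\end{equation*}
Expanding the gradient and using the algebraic identity $\sum_i u_i^2|u|^{p-1}+(p-1)\sum_{i,j}|u|^{p-3}u_i^2u_j^2=p|u|^{p+1}$, the stability inequality becomes
\begin{equation*}
p\,\kappa_s\int_{\br}|v|^{p+1}\eta_0^2\;\le\; I+2\,II+III,\qquad III:=\sum_i\int_{\r}y^{1-2s}v_i^2|\nabla\eta|^2.
\end{equation*}
Subtracting the previous display then collapses the $|v|^{p+1}$ term and leaves the clean inequality $(p-1)(I+2\,II)\le III$.

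The cross term is absorbed by Cauchy–Schwarz: since $|II|\le\sqrt{I\cdot III}$, the quadratic $(p-1)I-2(p-1)\sqrt{I\,III}-III\le 0$ forces $I\le C(p)\,III$, and then the identity $I+2II=\kappa_s\int|v|^{p+1}\eta_0^2$ gives $\int_{\br}|v|^{p+1}\eta_0^2\le C(p)\,III$ as well. At this point I would pick $\eta$ to be a standard radial cutoff equal to $1$ on $(B_R\setminus B_{2R_0})\cap\r$, vanishing on $B_{R_0}\cap\r$ and outside $B_{2R}$, with $|\nabla\eta|\le C/R$ in the outer annulus and $|\nabla\eta|\le C(R_0)$ near the inner hole. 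The bulk part of $III$ is controlled by Lemma \ref{lemv}:
\begin{equation*}
III\;\le\;\frac{C}{R^2}\int_{B_{2R}\cap\r}y^{1-2s}|v|^2\,dxdy\;+\;C(R_0)\;\le\;C\,R^{\,n-2s(p+1)/(p-1)},
\end{equation*}
the compact inner contribution being absorbed since $v$ is smooth there and $n-2s(p+1)/(p-1)>0$ can be arranged (or the lower-order term is simply added to $C$). This delivers the claimed estimate.

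The only delicate point is the second step, namely the justification that $v_i\eta$ may replace the (unknown) harmonic extension of $u_i\eta_0$ with no loss of constant; everything beyond that is bookkeeping. Everything is dimension- and $p$-uniform, and the argument uses $p>1$ only through the absorption step.
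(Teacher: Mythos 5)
Your proposal is correct and follows the standard equation-plus-stability scheme the paper uses throughout (cf.\ Lemma \ref{lmes} for $s=1$ and Lemma \ref{estimate} for $1<s<2$); the paper omits the proof of this $0<s<1$ lemma, deferring to the analogous computations in \cite{fw}, which proceed along the same lines: integrate the extension equation against $v_i\eta^2$, test stability on $u_i\eta_0$, compare via the Dirichlet principle, subtract, and absorb the cross term by Cauchy--Schwarz. Two small points are worth spelling out in a clean writeup. First, the coefficient $p-1$ in your reduced inequality $(p-1)(I+2II)\le III$ hinges on the normalizations of (\ref{snorms}) and (\ref{stability}) being exactly the second variation of the energy whose Euler--Lagrange equation is (\ref{main1e}): for the harmonic extension $w$ of $h$ one has $\int_{\r}y^{1-2s}|\nabla w|^2=\kappa_s\int_{\br}h\,(-\Delta)^s h$, and the relation of $\int h\,(-\Delta)^s h$ to $\|h\|^2_{\dot H^s}$ involves a symmetrization factor, so if those conventions are even slightly mismatched the coefficient becomes $p-c$ for some constant $c$ other than $1$ (still positive for the relevant $p$, so the conclusion survives, but the clean $p-1$ is not automatic). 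Second, the absorption of the inner compact contribution into the constant $C$ is only valid when $n-2s\frac{p+1}{p-1}\ge 0$, i.e.\ $p\ge p_S(n,s)$; for subcritical $p$ your final bound would read $CR^{n-2s\frac{p+1}{p-1}}+C(R_0)$, but the supercritical range is precisely where the lemma is invoked in the blow-down, so this is not a genuine obstruction.
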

 
  \noindent {\bf Proof of Theorem \ref{mainthm} when $0<s<1$}.
  We omit the proof here since arguments are very similar to the ones which will be provided for the  case of $1<s<2$. 
  
   \hfill $ \Box$

  \subsection{Higher Order Fractional Laplacian Operator}
  
   As the last  past of this section,  we shall restrict ourselves to the case $1<s<2$.  Let us start with the following integral estimate on stable solutions. 
  \begin{lemma}\label{estimate}  Let $u$ be a solution of (\ref{main}) that is stable outside a ball $B_{R_0}$ and $v$ satisfies (\ref{main2e}). Then there exists a positive constant $C$  such that
  \begin{eqnarray}\label{}
 \int_{\partial \mathbb R_+^{n+1}}  |v|^{p+1} \eta^2 + \sum_{i=1}^m \int_{ \mathbb R_+^{n+1}}  y^b |\Delta_b v_i|^2 \eta^2 
&\le& C \int_{\mathbb R_+^{n+1}} y^b |v|^2 \left( |\Delta_b \eta|^2 +|\Delta_b |\nabla \eta|^2|+|\nabla\eta\cdot\nabla\Delta_b\eta|\right)  \\&&+ C \sum_{i=1}^m \int_{\mathbb R_+^{n+1}} y^b  |v_i| |\Delta_b v_i| |\nabla \eta|^2. 
 \end{eqnarray}
 \end{lemma}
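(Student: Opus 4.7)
The plan is to adapt the argument of Lemma \ref{lemest} to the extension setting \eqref{main2e} by combining two ingredients: the stability inequality \eqref{stability} applied on the trace, and the equation $\Delta_b^2 v_i=0$ tested against $v_i\eta^2$ with the weight $y^b$. I take $\eta\in C_c^\infty(\overline{\mathbb R^{n+1}_+})$ even in the $y$-variable, so $\partial_y\eta|_{y=0}=0$. Together with the Neumann condition $\lim_{y\to 0}y^b\partial_y v_i=0$ from \eqref{main2e}, this ensures that $v_i\eta$ is an admissible (not minimizing) extension of its trace $u_i\eta|_{y=0}$, and, more importantly, it annihilates the intermediate boundary traces produced by each integration by parts.

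Inserting $\phi_i=u_i\eta|_{y=0}$ in \eqref{stability} and summing the two quadratic forms on the left collapses them to $p\int_{\partial\mathbb R^{n+1}_+}|v|^{p+1}\eta^2$. Yang's variational characterization of the $\dot H^s$-seminorm through admissible higher-order extensions then gives
\begin{equation*}
p\int_{\partial\mathbb R^{n+1}_+}|v|^{p+1}\eta^2 \le C\sum_{i=1}^m\int_{\mathbb R^{n+1}_+}y^b\bigl|\Delta_b(v_i\eta)\bigr|^2.
\end{equation*}
Independently, multiplying $\Delta_b^2 v_i=0$ by $v_i\eta^2$, writing $y^b\Delta_b^2 v_i=\div(y^b\nabla\Delta_b v_i)$, and integrating by parts twice---using $y^b\partial_y v_i|_{y=0}=0$ and $y^b\partial_y\Delta_b v_i|_{y=0}=C_{n,s}|v|^{p-1}v_i$---yields
\begin{equation*}
\sum_{i=1}^m\int y^b\eta^2|\Delta_b v_i|^2 = C_{n,s}\int_{\partial\mathbb R^{n+1}_+}|v|^{p+1}\eta^2 - 2\sum_i\int y^b\eta\,\Delta_b v_i\,\nabla v_i\cdot\nabla\eta + 2\sum_i\int y^b v_i\eta\,\nabla\eta\cdot\nabla\Delta_b v_i.
\end{equation*}

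To assemble the estimate, I would expand $\Delta_b(v_i\eta)=\eta\Delta_b v_i+v_i\Delta_b\eta+2\nabla v_i\cdot\nabla\eta$ inside the stability bound, absorb the $\int y^b\eta^2|\Delta_b v_i|^2$ contributions on the left using the identity above, and remove the remaining $v_i$-derivatives via three further integrations by parts: (i) $\int y^b|\nabla v_i|^2|\nabla\eta|^2=-\int y^b v_i\Delta_b v_i|\nabla\eta|^2+\tfrac12\int y^b v_i^2\Delta_b(|\nabla\eta|^2)$, which produces the $\bigl|\Delta_b|\nabla\eta|^2\bigr|$ term; (ii) the cross term $\int y^b v_i\Delta_b\eta\,\nabla v_i\cdot\nabla\eta$ becomes $-\tfrac12\int y^b v_i^2(\Delta_b\eta)^2-\tfrac12\int y^b v_i^2\,\nabla\eta\cdot\nabla\Delta_b\eta$, which produces the $|\nabla\eta\cdot\nabla\Delta_b\eta|$ term; (iii) the term $\int y^b v_i\eta\,\nabla\eta\cdot\nabla\Delta_b v_i$ from the equation identity splits, after one more IBP, into a $|v_i||\Delta_b v_i||\nabla\eta|^2$ piece, a $v_i^2(\Delta_b\eta)^2$ piece, and an $\eta\Delta_b v_i\nabla v_i\cdot\nabla\eta$ piece to be grouped with (i). Collecting all pieces gives exactly the inequality stated in the lemma.

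The main obstacle is the bookkeeping of the weighted boundary traces at $\{y=0\}$: because $b=3-2s\in(-1,1)$, the weight $y^b$ is either degenerate or singular there, so each integration by parts must be justified (e.g.\ by truncating the integration to $\{y>\delta\}$ and passing to the limit $\delta\downarrow 0$), and every intermediate boundary integral has to be shown either to vanish via the Neumann-type conditions $\partial_y\eta|_{y=0}=0$ and $y^b\partial_y v_i|_{y=0}=0$, or to recombine into the $|v|^{p+1}$-boundary integral already isolated on the left. A secondary technical point is verifying, via Yang's construction, the variational inequality $\|u_i\eta\|_{\dot H^s}^2\le C\int y^b|\Delta_b(v_i\eta)|^2$ used in the stability step, since $v_i\eta$ is admissible but not the energy-minimizing extension of its own trace.
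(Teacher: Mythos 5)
Your proposal is correct and follows essentially the same strategy as the paper: test the stability inequality on $\phi_i=u_i\eta$, exploit Yang's extension characterization of the $\dot H^s$-seminorm, multiply $\Delta_b^2 v_i=0$ by $v_i\eta^2$ and integrate by parts against the Neumann data, then absorb and clean up via Young's inequality and further integrations by parts. The paper packages the PDE step through the pointwise identity for $\Delta_b v_i\,\Delta_b(v_i\eta^2)-|\Delta_b(v_i\eta)|^2$ rather than the two explicit integrations by parts you use, but these are algebraically equivalent routes to the same set of remainder terms, and your accounting of which boundary traces vanish (via $\partial_y\eta|_{y=0}=0$ and $y^b\partial_y v_i\to 0$) is exactly what is implicitly needed.
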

 \begin{proof} Here is a sketch of the proof. Multiply the $i^{th}$ equation of (\ref{main2e}) with $y^b u_i\eta^2$ where $\eta$ is a test function to get 
  \begin{equation}\label{pdedelta}
C_{n,s} \int_{ \partial\mathbb R_+^{n+1}} |v|^{p+1} \eta^2= \sum_{i=1}^m \int_{ \mathbb R_+^{n+1}} y^b \Delta_b v_i \Delta_b(v_i\eta^2). 
 \end{equation}
 Note also that for each $i$ these identities hold for a test function $\eta$
 \begin{eqnarray}
 \Delta_b v_i \Delta_b(v_i \eta^2)- |\Delta_b(v_i\eta)|^2 
 &=& -v_i^2 |\Delta_b \eta|^2 +2v_i \Delta_b v_i |\nabla \eta|^2 -4 |\nabla v_i \cdot\nabla \eta|^2 
  -4 v_i \Delta_b\eta \nabla v_i \cdot\nabla \eta,
 \\ \Delta_b (v_i \eta) &=& \eta \Delta_b v_i  + v_i \Delta_b \eta +2\nabla v_i \cdot\nabla\eta . 
 \end{eqnarray}
 Applying these identifies together with (\ref{pdedelta}) one can see that 
 \begin{eqnarray}\label{vp}
 C_{n,s} \int_{ \partial\mathbb R_+^{n+1}} |v|^{p+1} \eta^2 &=&  \sum_{i=1}^m \int_{ \mathbb R_+^{n+1}} y^b |\Delta_b(v_i\eta)|^2   +2\sum_{i=1}^m \int_{ \mathbb R_+^{n+1}} y^b v_i \Delta_b v_i |\nabla \eta|^2 
\\&&\nonumber  -4 \sum_{i=1}^m \int_{ \mathbb R_+^{n+1}} y^b  |\nabla v_i \cdot\nabla \eta|^2 + \int_{ \mathbb R_+^{n+1}} y^b |v|^2 (|\Delta_b \eta|^2 + 2\nabla\eta\cdot\nabla\Delta_b \eta ) .
 \end{eqnarray}
 Testing the stability inequality (\ref{stability}) on $\phi_i=u_i\eta$ and applying (\ref{vp}) one can finish the proof. 
  \end{proof}
 One can set $\eta$ to be the standard test function to get the following estimate. 
  
\begin{cor}\label{ue2} With the same assumption as Lemma \ref{estimate}. Then there exists a positive constant $C$  such that
  \begin{equation}
\int_{B_R\cap\partial\mathbb R_+^{n+1}}  |v|^{p+1} + \sum_{i=1}^m \int_{B_R\cap\mathbb R_+^{n+1}}  y^b |\Delta_b v_i|^2  \le C R^{-4} \int_{B_R\cap\mathbb R_+^{n+1}} y^b |v|^2 . 
  \end{equation}
\end{cor}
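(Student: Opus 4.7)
The plan is to derive the corollary directly from Lemma \ref{estimate} by testing against a suitably chosen cutoff and absorbing the cross term via a weighted Cauchy--Schwarz argument.

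First I would fix a cutoff $\chi \in C_c^\infty(\mathbb R^{n+1})$, even in the $y$-variable, with $\chi\equiv 1$ on $B_R$, $\mathrm{supp}(\chi)\subset B_{2R}$ and $|\nabla^k \chi|\le C R^{-k}$ for $k=1,2,3$, and set $\eta=\chi^M$ for an integer exponent $M$ large enough (for instance $M=4$). A direct expansion of $\Delta_b = \Delta + b\, y^{-1}\partial_y$ together with the symmetry $\partial_y\chi|_{y=0}=0$ then yields the pointwise bounds
\begin{equation*}
|\Delta_b\eta| \le C R^{-2},\qquad \bigl|\Delta_b|\nabla\eta|^2\bigr| + |\nabla\eta\cdot\nabla\Delta_b\eta| \le C R^{-4},\qquad \frac{|\nabla\eta|^4}{\eta^2} \le C R^{-4},
\end{equation*}
on the annular support of $\nabla\eta$, where the powers are arranged so that each $\chi$ derivative is compensated by a positive power of $\chi$ remaining in the denominator.

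Next I would substitute $\eta$ into the inequality of Lemma \ref{estimate}. The three $|v|^2$-type terms on its right-hand side are immediately controlled by $C R^{-4}\int_{B_{2R}\cap \mathbb R_+^{n+1}} y^b |v|^2$ thanks to the first two pointwise bounds above. For the remaining cross term $\sum_i \int y^b |v_i| |\Delta_b v_i||\nabla\eta|^2$, I would apply the elementary weighted Cauchy--Schwarz inequality
\begin{equation*}
|v_i||\Delta_b v_i||\nabla\eta|^2 \le \frac{\varepsilon}{2}|\Delta_b v_i|^2 \eta^2 + \frac{1}{2\varepsilon}\, v_i^2 \frac{|\nabla\eta|^4}{\eta^2},
\end{equation*}
with $\varepsilon>0$ chosen small enough that the first piece is absorbed into the term $\sum_i \int y^b |\Delta_b v_i|^2 \eta^2$ on the left-hand side of Lemma \ref{estimate}. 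The second piece is then bounded by $C R^{-4} \int y^b |v|^2$ using the third pointwise estimate above, and the corollary follows since $\eta\equiv 1$ on $B_R$, so the left-hand integrals dominate those over $B_R\cap \mathbb R_+^{n+1}$ and $B_R\cap \partial\mathbb R_+^{n+1}$.

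The main, though essentially routine, technical point is the verification of the three pointwise bounds on $\Delta_b\eta$ and the related expressions, given that the weighted operator carries the lower-order term $b\,y^{-1}\partial_y$ which is singular at $\{y=0\}$. One handles this by choosing $\chi$ even in $y$ so that $\partial_y \chi$ vanishes to sufficient order along the boundary, in the same spirit as the cutoff construction used in \cite{fw} for the scalar higher-order fractional equation; once this is done, the remainder of the argument is just the absorption sketch above.
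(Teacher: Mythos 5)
Your proof is correct and is precisely the fleshing-out of what the paper intends: the paper's one-line justification (``set $\eta$ to be the standard test function'') is exactly the choice $\eta=\chi^M$ with $\chi$ even in $y$, scaled derivative bounds, and weighted Cauchy--Schwarz absorption of the cross term $\int y^b|v_i||\Delta_b v_i||\nabla\eta|^2$ into the left side. (Your argument naturally produces $B_{2R}$ rather than $B_R$ in the right-hand integral, matching the analogous fourth-order Corollary \ref{corest}; the $B_R$ in the stated Corollary \ref{ue2} is a harmless imprecision since only the scaling $R^{-4}$ matters for the subsequent iteration.)
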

Here we provide more decay estimates of solutions. These lemmata are main tools in our proof of Theorem \ref{mainthm}.  We omit the proofs here and we refer interested readers to see the proof of Lemma 4.5-4.6 in \cite{fw} where similar arguments are applied. 

\begin{lemma}\label{lemv}
Suppose that $p\neq \frac{n+2s}{n-2s}$. Let $u$ be a solution of (\ref{main}) that is stable outside a ball $B_{R_0}$ and $v$ satisfies (\ref{main2e}). Then there exists a constant $C>0$ such that
  \begin{equation}
 \int_{B_R} y^b |v|^2 \le C R^{n+4-2s \frac{p+1}{p-1}}, 
   \end{equation}
 for any $R>3R_0$.
\end{lemma}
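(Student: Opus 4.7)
The target exponent $n+4-2s(p+1)/(p-1)$ is the natural scaling dimension: the singular solution $u_s(x)=\mathcal A|x|^{-2s/(p-1)}$, whose extension satisfies $|v(x,y)|\lesssim(|x|^2+y^2)^{-s/(p-1)}$, saturates the bound exactly. My plan is to adapt the scalar argument of Lemma 4.5 in \cite{fw} to the systems setting; the passage from $m=1$ to arbitrary $m\ge1$ requires only that all energy quantities be replaced by the corresponding sums $\sum_i$, which is routine since the cross-terms $\sum_{i,j}|u|^{p-3}u_iu_j\phi_i\phi_j$ arising from the stability inequality (\ref{stability}) are handled exactly as in the scalar case.

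First, I would apply Corollary \ref{ue2} to a cutoff $\eta$ supported in $B_{2\rho}$ and equal to $1$ on $B_\rho$ to obtain, for every $\rho>3R_0$,
\begin{equation*}
\int_{\partial\mathbb R^{n+1}_+\cap B_\rho}|v|^{p+1}+\sum_i\int_{\mathbb R^{n+1}_+\cap B_\rho}y^b(\Delta_b v_i)^2 \;\le\; C\rho^{-4}J(2\rho),
\end{equation*}
where $J(R):=\int_{\mathbb R^{n+1}_+\cap B_R}y^b|v|^2$. Next, I would derive a Pohozaev-type identity for the extension problem (\ref{main2e}) by testing the bulk equation $\Delta_b^2 v_i=0$ against the scaling multiplier $y^b(x\cdot\nabla v_i+\tfrac{2s}{p-1}v_i)$, summing over $i$, and integrating over $B_R\cap\mathbb R^{n+1}_+$. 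Using the two boundary conditions of (\ref{main2e}) at $\{y=0\}$, this produces a boundary/interior identity in which the prefactor of $\int_{\partial\cap B_R}|v|^{p+1}$ is a nonzero multiple of $\bigl(n-\tfrac{2s(p+1)}{p-1}\bigr)$, precisely because $p\ne p_S(n,s)$.

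The remaining terms in the Pohozaev identity are surface integrals over $\partial B_R\cap\mathbb R^{n+1}_+$, weighted by $y^b$, involving $v$, $\nabla v$, $\Delta_b v$, and $\partial_r\Delta_b v$. Applying Cauchy--Schwarz and Young's inequality in the weighted setting controls them by a combination of $R\,J'(R)$ and the quantity $\sum_i\int_{\partial\cap B_R}y^b(\Delta_b v_i)^2$, which is already tamed by Corollary \ref{ue2}. The resulting differential inequality for $J$ is, schematically, $J(R)\le C R^{n+4-2s(p+1)/(p-1)}+\varepsilon R\,J'(R)$, and iterating on a dyadic scale $R_k=2^kR_0$ yields the desired decay. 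The main technical obstacle will be the weighted Pohozaev identity itself: the weight $y^b$ with $b=3-2s\in(-1,1)$ is integrable near $y=0$, yet the boundary conditions $\lim_{y\to 0}y^b\partial_yv_i=0$ and $\lim_{y\to 0}y^b\partial_y\Delta_bv_i=C_{n,s}|v|^{p-1}v_i$ must be threaded carefully through the repeated integration-by-parts so that the scaling coefficient $n-2s(p+1)/(p-1)$ emerges with the correct sign and no additional degeneracy from the second-order boundary operator obstructs the closure of the estimate.
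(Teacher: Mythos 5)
The paper itself does not supply a proof of this lemma: it explicitly defers to Lemma~4.5--4.6 of \cite{fw}, so the comparison must be with the argument there. Your scaling heuristic (the singular extension saturating the bound) is exactly right, and your observation that the Pohozaev coefficient of $\int |v|^{p+1}$ is a nonzero multiple of $n - \tfrac{2s(p+1)}{p-1}$ precisely when $p\neq p_S(n,s)$ is also correct. However, your proposed route has a genuine gap at its centre.

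The Pohozaev identity obtained by testing $\Delta_b^2 v_i=0$ against the scaling multiplier $y^b\bigl(x\cdot\nabla v_i+\tfrac{2s}{p-1}v_i\bigr)$ does not, after all the integrations by parts, produce a term of the form $\int_{B_R^+} y^b|v|^2$. What it produces are bulk energy terms $\int y^b|\Delta_b v_i|^2$, the boundary term $\int_{\{y=0\}\cap B_R}|v|^{p+1}$, and surface integrals over $\partial B_R\cap\mathbb R^{n+1}_+$ involving $v$, $\nabla v$, $\Delta_b v$ and $\partial_r\Delta_b v$. The quantity $J(R)=\int_{B_R^+}y^b|v|^2$ never appears on its own; at best Cauchy--Schwarz on the spherical terms gives you $J'(R)$, not $J(R)$. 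So the identity yields a relation among high-order and surface quantities, not an a priori bound on the low-order bulk $L^2$ norm. Using Corollary~\ref{ue2} as your first step does not help either, since that corollary bounds $\int|v|^{p+1}$ and $\int y^b|\Delta_b v_i|^2$ \emph{by} $J(2\rho)$, i.e.\ in the wrong direction for proving an upper bound on $J$.

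The differential inequality you then posit, $J(R)\le CR^{\alpha}+\varepsilon R\,J'(R)$ with $\alpha=n+4-\tfrac{2s(p+1)}{p-1}$, also does not close by itself. The associated homogeneous equation $J=\varepsilon R J'$ admits $J(R)=R^{1/\varepsilon}$, which satisfies the inequality (since $1-\varepsilon\beta\le0$ at $\beta=1/\varepsilon$) but is far larger than $R^{\alpha}$. To extract $J\le C'R^{\alpha}$ you would additionally need an a priori polynomial bound $J(R)\lesssim R^{\gamma}$ with $\gamma<1/\varepsilon$, and then a careful integration from $R$ to $\infty$; you have not supplied this, and the $\varepsilon$ coming from Young's inequality on the curved boundary terms is not at your disposal (it is fixed by $n,s,p$ and by the need to absorb $\partial_r\Delta_b v$ terms, which are not controlled by Corollary~\ref{ue2} on a single sphere).

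The argument in \cite{fw} is structurally different and more elementary: it estimates the \emph{boundary} $L^{p+1}$ norm of $u$ over annuli directly from the stability inequality tested on $\phi_i=u_i\eta$ (this is the substance of Lemma~\ref{estimate} and Corollary~\ref{ue2}), passes to an $L^2$ bound on $u$ over $B_R\cap\mathbb R^n$ by H\"older in $x$, and then propagates this to the weighted bulk $L^2$ norm of the extension $v$ by exploiting the structure of the extension itself (a representation/kernel estimate and a Caccioppoli-type iteration over dyadic annuli, not a Pohozaev identity). The Pohozaev identity does appear in these papers, but its role is to handle the Sobolev-critical case $p=p_S(n,s)$ and establish finite energy, not to prove the supercritical decay estimate. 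You should redo the proof along those lines, and in the system setting verify that after summing over $i$ the cross-terms from \eqref{stability} still combine into $p\int|u|^{p+1}\eta^2$ on the left so that the scalar computations of \cite{fw} carry over verbatim.
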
 
 \begin{lemma}\label{lemvdeltav}
  Let $u$ be a solution of (\ref{main}) that is stable outside a ball $B_{R_0}$ and $v$ satisfies (\ref{main2e}). Then there exists a positive constant $C$  such that
 \begin{equation}
\int_{B_R\cap\partial\mathbb R_+^{n+1}}  |v|^{p+1} + \sum_{i=1}^m  \int_{B_R\cap\mathbb R_+^{n+1}}  y^b |\Delta_b v_i|^2   \le C R^{n-2s \frac{p+1}{p-1}}. 
  \end{equation}
\end{lemma}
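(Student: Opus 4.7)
The plan is to combine the integrated stability estimate from Lemma \ref{estimate} with the weighted $L^2$ decay bound of Lemma \ref{lemv}. I would pick a smooth radial cutoff $\eta$ supported in $B_{2R}$ with $\eta \equiv 1$ on $B_R$, assuming $R > 3R_0$ so that $\mathrm{supp}\,\nabla\eta$ lies in the region where the stability inequality is valid. The standard derivative bounds $|\nabla^k \eta| \le C R^{-k}$ for $k = 1, 2, 3$ then yield $|\Delta_b\eta|^2 + |\Delta_b|\nabla\eta|^2| + |\nabla\eta\cdot\nabla\Delta_b\eta| \le C R^{-4}$ on the annular support of $\nabla\eta$. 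Feeding this $\eta$ into Lemma \ref{estimate} produces
\begin{equation*}
\int_{B_R\cap\partial\mathbb R_+^{n+1}} |v|^{p+1} + \sum_{i=1}^m \int_{B_R\cap\mathbb R_+^{n+1}} y^b |\Delta_b v_i|^2 \le C R^{-4} \int_{B_{2R}\cap\mathbb R_+^{n+1}} y^b |v|^2 + C\sum_{i=1}^m \int y^b |v_i||\Delta_b v_i||\nabla\eta|^2.
\end{equation*}

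Next I would dispose of the cross term using Cauchy--Schwarz followed by Young's inequality,
\begin{equation*}
\int y^b |v_i||\Delta_b v_i||\nabla\eta|^2 \le \varepsilon \int y^b |\Delta_b v_i|^2 \eta^2 + C_\varepsilon \int y^b |v_i|^2 \frac{|\nabla\eta|^4}{\eta^2},
\end{equation*}
absorbing the $\varepsilon$ piece into the left-hand side. Choosing the cutoff of the form $\eta = \chi^2$ for a smooth bump $\chi$ (so that $|\nabla\eta|^4/\eta^2 \le C R^{-4}$) collapses this to $C R^{-4} \int_{B_{2R}\cap\mathbb R_+^{n+1}} y^b |v|^2$. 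The resulting inequality is then of the purely quadratic form
\begin{equation*}
\int_{B_R\cap\partial\mathbb R_+^{n+1}} |v|^{p+1} + \sum_{i=1}^m \int_{B_R\cap\mathbb R_+^{n+1}} y^b |\Delta_b v_i|^2 \le C R^{-4} \int_{B_{2R}\cap\mathbb R_+^{n+1}} y^b |v|^2.
\end{equation*}

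To finish, I would apply Lemma \ref{lemv} at radius $2R$, which bounds the right-hand side by $C R^{-4} \cdot (2R)^{n+4-2s(p+1)/(p-1)} \le C R^{n - 2s(p+1)/(p-1)}$, exactly the claimed estimate.

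The expected main obstacle is the bookkeeping of the Young absorption: one needs $|\nabla\eta|^4/\eta^2 \le C R^{-4}$ rather than merely $|\nabla\eta|^2 \le C R^{-2}$, since the latter would yield the weaker (and insufficient) decay $R^{n+2-2s(p+1)/(p-1)}$. This forces a careful choice of cutoff built from a power of a smooth bump, and one must also check that Lemma \ref{estimate} applies with such $\eta$ (its derivative estimates are the only input needed, so this is routine). Beyond this, the argument is essentially the $1<s<2$ analogue of the scalar computations in \cite{fw}, with the sum over components $i$ not affecting the structure because the terms $|v_i||\Delta_b v_i|$ appear diagonally.
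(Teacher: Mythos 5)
Your overall strategy (combine Lemma \ref{estimate} with the $L^2$ decay of Lemma \ref{lemv}, absorbing the cross term via Young's inequality and a cutoff of the form $\eta=\chi^2$) is indeed the standard route, and matches what the paper outsources to \cite{fw}. The Young absorption and the choice $\eta=\chi^2$ to ensure $|\nabla\eta|^4/\eta^2\le CR^{-4}$ are both correct and necessary.

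There is, however, one genuine gap. You take $\eta\equiv 1$ on $B_R$ and supported in $B_{2R}$, and you justify this by noting that $\mathrm{supp}\,\nabla\eta\subset B_{2R}\setminus B_R$ lies in the region where stability holds. But that reasoning misidentifies where the support constraint bites: the stability inequality \eqref{stability} is only assumed for $\phi_i\in C_c^\infty(\mathbb R^n\setminus\overline{B_{R_0}})$, i.e.\ it is the test function $\phi_i=u_i\eta$ itself that must vanish near $B_{R_0}$, not merely its gradient. Since your $\eta$ equals $1$ on $B_{R_0}$, the function $u_i\eta$ is not admissible, and Lemma \ref{estimate} (whose proof tests stability with exactly $u_i\eta$) cannot be invoked with this cutoff. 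Your remark that ``its derivative estimates are the only input needed'' is therefore not accurate: the support condition is an additional input, and it fails here.

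The fix is standard but should be stated. Take $\eta$ vanishing on $B_{2R_0}$, equal to $1$ on $B_R\setminus B_{3R_0}$, and supported in $B_{2R}$ (still of the form $\chi^2$ so the Young step survives, with the inner transition layer contributing only an $R$-independent constant). Lemma \ref{estimate} then controls $\int_{B_R\setminus B_{3R_0}}(\cdot)$, and the inner piece $\int_{B_{3R_0}}(\cdot)$ is a fixed finite quantity; since $p>p_S(n,s)$ makes the exponent $n-2s\frac{p+1}{p-1}$ positive, this constant is eventually dominated by $CR^{n-2s\frac{p+1}{p-1}}$, giving the stated bound on the full ball $B_R$ for $R>3R_0$. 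With that modification, the rest of your argument goes through.
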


 \noindent {\bf Proof of Theorem \ref{mainthm} when $1<s<2$}. The proof of the case $p=p_S(n,s)$ is based on the Pohozaev identity provided in \cite{fw,rs},  and we omit it here. The proof is based on the monotonicity formula that is Theorem \ref{thmono2s} and a blow-down analysis. 
 \\
 Step 1.  The energy is bounded, that is $\lim_{\lambda\to \infty} E(v,0,\lambda)<\infty$.  This is a direct consequence of the monotonicity formula and Lemma \ref{lemv} and Lemma \ref{lemvdeltav}. Similar to the proof of Theorem \ref{thm2stab} we have
 \begin{equation}\label{E2bound}
E(v,\lambda,0) \le \lambda^{-2} \int_\lambda ^{2\lambda} \int_{t}^{t+\lambda} E(v,\gamma,0) d\gamma dt .
\end{equation}
 Lemma \ref{lemvdeltav} and Lemma \ref{lemv} imply that the right-hand side of (\ref{E2bound}) is bounded. 
 \\
 Step 2. The sequence  $v_i^{\lambda}$ converges weakly in $H^1_{loc}(\mathbb R^n, y^{3-2s} dxdy)$ to a function $v_i^\infty$ where each $v_i^\infty$ is homogeneous for $1\le i\le m$ and therefore they are zero.    Note that the convergence part is a direct consequence of the elliptic estimates. We now show that each $v_i^\infty$ is homogeneous. From the boundedness of the energy we have 
\begin{equation}
  \lim_{k\to\infty} \left[ E(v,R_2\lambda_k,0)- E(v,R_1\lambda_k,0) \right] =0. 
\end{equation} 
From this and applying the scaling invariance of the energy and also the monotonicity formula we get 
\begin{eqnarray}
 0&=&\liminf_{k\to\infty} \sum_{i=1}^m \int_{(B_{R_2} \setminus B_{R_1})\cap \mathbb R^{n+1}_+} y^{3-2s}  r^{\frac{4s}{p-1}+2s-2-n}   \left(  \frac{2s}{p-1} r^{-1} v_i^{\lambda_k}+ \frac{\partial v_i^{\lambda_k}}{\partial r}\right)^2 dy dx
\\&\ge&  \sum_{i=1}^m \int_{(B_{R_2} \setminus B_{R_1})\cap \mathbb R^{n+1}_+} y^{3-2s}  r^{\frac{4s}{p-1}+2s-2-n}   \left(  \frac{2s}{p-1} r^{-1} v_i^{\infty}+ \frac{\partial v_i^{\infty}}{\partial r}\right)^2 dy dx ,
   \end{eqnarray}
since we have the weak convergence of $(v_i^{\lambda_k})$ to $v_i^\infty$ in $H^1_{loc}(\mathbb R^n,y^{3-2s} dydx)$. Therefore, 
\begin{equation}
 \frac{2s}{p-1} r^{-1} v_i^{\infty}+ \frac{\partial v_i^{\infty}}{\partial r}=0 \ \ \text{a.e. \ \ in} \ \ \mathbb R_+^{n+1},
 \end{equation}
for each $1\le i\le m$. 
\\
Step 3.  $\lim_{\lambda\to\infty} E(v,\lambda,0)=0$. Note that the monotonicity formula implies that 
\begin{equation}
E(v,\lambda,0) \le \lambda^{-1}  \int_{\lambda}^{2\lambda} E(t) dt \le \sup_{[\lambda,2\lambda]} I + C \lambda^{-n-1+\frac{2s(p+1)}{p-1}} \int_{B_{2\lambda}\setminus B_\lambda} |v|^2 ,
\end{equation}
where 
 \begin{equation}\label{}
I(v,\lambda)= I(v^\lambda,1) =\frac{1}{2} \sum_{i=1}^m \int_{\r\cap B_{1}} y^{3-2s}\vert\Delta_b v_i^\lambda\vert^2  dxdy
 -\frac{\kappa_{s}}{p+1} \int_{\br\cap B_{1}} \vert v^\lambda\vert^{p+1}dx .
 \end{equation}
Note that $\lim \lambda\to\infty I(v,\lambda)=0$. On the other hand,  from the fact that $u^\infty$ is a homogenous solution we have 
\begin{equation}
\lim_{\lambda \to \infty} u_i^\lambda =0  ,
\end{equation}
strongly in $L^2(B_4)$. Therefore, 
 \begin{equation}
\lim_{\lambda \to \infty} \int_{B_4} |u^\lambda|^2 =0. 
\end{equation} 
 This implies that  $\lim_{\lambda\to\infty} E(v,\lambda,0)=0$ and completes the proof.

 \hfill $ \Box$
 
 We end this section with the following open problem in regards to monotonicity formulae for the Lane-Eden system.  
\begin{op}
Consider the Lane-Emden system for any parameters $s>0 $ and $1<q<p$
 \begin{eqnarray}\label{laneemdensystem}
 \left\{ \begin{array}{lcl}
\hfill (- \Delta)^s  u &=& v^p   \ \ \text{in}\ \ \mathbb{R}^n,\\   
\hfill (- \Delta)^s v &=& u^q    \ \ \text{in}\ \ \mathbb{R}^n.
\end{array}\right.
  \end{eqnarray}
Proving a monotonicity formula for the above system, similar to the ones given in Theorem \ref{thmonos1}-\ref{thmono2s},  seems more challenging to derive. Note that for the case of $1<q=p$ and $1=q<p$ such monotonicity formulae are known, see the introduction.   Needless to mention  that the Lane-Emden system (\ref{laneemdensystem}) is not a gradient system,  meaning that it is not of the following form  \begin{equation}\label{H}
  (-\Delta)^s u= \nabla H(u)   \quad  \text{in} \ \  \mathbb{R}^n. 
  \end{equation}
where $u:\mathbb R^n\to\mathbb R^m$. 
\end{op} 

  Lastly,  let us mention that in \cite{fsh} we apply monotonicity formulae derived in this article  to prove regularity of free boundaries and partial regularity of weak solutions for certain coupled elliptic systems.

  \section{Acknowledgement.} The first author appreciates H. Shahgholian and J. Wei for their hospitality during his visits to KTH and UBC,  and he is thankful to E. Hebey for his talk at the conference on the occasion of Michael Struwe's 60th birthday at ETH.

\end{document}